\title{The greedy flip tree of a subword complex}
\thanks{Research partially supported by grant MTM2008-04699-C03-02 and MTM2011-22792 of the spanish Ministerio de Ciencia e Innovaci\'on, by European Research Project ExploreMaps (ERC StG 208471), and by a postdoctoral grant of the Fields Institute of Toronto.}
\author{Vincent Pilaud}
\address{CNRS \& LIX, \'Ecole Polytechnique, Palaiseau}
\email{vincent.pilaud@lix.polytechnique.fr}
\urladdr{http://www.lix.polytechnique.fr/~pilaud/}
\newtheorem{theorem}{Theorem}[section]
\newtheorem{proposition}[theorem]{Proposition}
\newtheorem{lemma}[theorem]{Lemma}
\theoremstyle{definition}
\newtheorem{example}[theorem]{Example}
\newtheorem{remark}[theorem]{Remark}
\newcommand{\R}{\mathbb{R}} 
\newcommand{\Z}{\mathbb{Z}} 
\newcommand{\cX}{\mathcal{X}} 
\newcommand{\cN}{\mathcal{N}} 
\newcommand{\fS}{\mathfrak{S}} 
\newcommand{\sfr}{{\sf r}} 
\newcommand{\sfR}{{\sf R}} 
\newcommand{\set}[2]{\left\{ #1 \;\middle|\; #2 \right\}} 
\newcommand{\multiset}[2]{\left\{\!\!\left\{ #1 \;\middle|\; #2 \right\}\!\!\right\}} 
\newcommand{\ssm}{\smallsetminus} 
\newcommand{\dotprod}[2]{\langle #1 | #2 \rangle} 
\newcommand{\one}{1\!\!1} 
\newcommand{\eqdef}{\mbox{\,\raisebox{0.2ex}{\scriptsize\ensuremath{\mathrm:}}\ensuremath{=}\,}} 
\newcommand{\eraseFirst}{\vdash} 
\newcommand{\eraseLast}{\dashv} 
\newcommand{\subwordComplex}[1][\sq{Q},\rho]{\mathcal{SC}(#1)} 
\newcommand{\length}{\ell} 
\newcommand{\DemazureProduct}{\delta} 
\newcommand{\Root}[2]{{\sfr}(#1,#2)} 
\newcommand{\Roots}[1]{{\sfR}(#1)} 
\newcommand{\sq}[1]{{\rm #1}} 
\newcommand{\flipGraph}[1][\sq{Q},\rho]{\mathcal{G}(#1)} 
\newcommand{\facets}[1][\sq{Q},\rho]{\mathcal{F}(#1)} 
\newcommand{\negativeGreedy}[1][\sq{Q},\rho]{\mathsf{N}(#1)} 
\newcommand{\positiveGreedy}[1][\sq{Q},\rho]{\mathsf{P}(#1)} 
\newcommand{\negativeGreedyTree}[1][\sq{Q},\rho]{\mathcal{N}(#1)} 
\newcommand{\positiveGreedyTree}[1][\sq{Q},\rho]{\mathcal{P}(#1)} 
\newcommand{\negativeGreedyIndex}{\mathsf{n}} 
\newcommand{\positiveGreedyIndex}{\mathsf{p}} 
\newcommand{\shiftRight}[1]{#1^{\rightarrow}} 
\newcommand{\shiftLeft}[1]{#1^{\leftarrow}} 
\DeclareMathOperator{\conv}{conv} 
\DeclareMathOperator{\inv}{inv} 
\DeclareMathOperator{\join}{\star} 
\newcommand{\fref}[1]{Figure~\ref{#1}} 
\newcommand{\ie}{\textit{i.e.}~} 
\newcommand{\ordinal}{\textsuperscript{th}} 
\definecolor{darkblue}{rgb}{0,0,0.7} 
\newcommand{\darkblue}{\color{darkblue}} 
\newcommand{\defn}[1]{\emph{\darkblue #1}} 
\renewcommand{\paragraph}[1]{\medskip\noindent\textsc{#1} ---} 
\begin{document}

\begin{abstract}
We describe a canonical spanning tree of the ridge graph of a subword complex on a finite Coxeter group. It is based on properties of greedy facets in subword complexes, defined and studied in this paper. Searching this tree yields an enumeration scheme for the facets of the subword complex. This algorithm extends the greedy flip algorithm for pointed pseudotriangulations of points or convex bodies in the plane.
\end{abstract}

\vspace*{-1cm}

\maketitle

\vspace*{-.5cm}

\section{Introduction}

Subword complexes on Coxeter groups were defined and studied by A.~Knutson and E.~Miller in the context of Gr\"obner geometry in Schubert varieties~\cite{KnutsonMiller-subwordComplex,KnutsonMiller-GroebnerGeometry}. Type~$A$ spherical subword complexes can be visually interpreted using pseudoline arrangements on primitive sorting networks. These were studied by V.~Pilaud and M.~Pocchiola \cite{PilaudPocchiola} as combinatorial models for pointed pseudotriangulations of planar point sets~\cite{RoteSantosStreinu-survey} and for multitriangulations of convex polygons~\cite{PilaudSantos-multitriangulations}. These two families of geometric graphs extend in two different ways the family of triangulations of a convex polygon.

The greedy flip algorithm was initially designed to generate all pointed pseudotriangulations of a given set of points or convex bodies in general position in the plane~\cite{PocchiolaVegter, BronnimannKettnerPocchiolaSnoeying}. It was then extended in~\cite{PilaudPocchiola} to generate all pseudoline arrangements supported by a given primitive sorting network. The goal of this paper is to generalize the greedy flip algorithm to any subword complex on any finite Coxeter system. Based on combinatorial properties of greedy facets, we construct the greedy flip tree of a subword complex, which spans its ridge graph. This tree can be visited in polynomial time per node and polynomial working space to generate all facets of the subword complex. For type~$A$ spherical subword complexes, the resulting algorithm is that of~\cite{PilaudPocchiola}, although the presentation is quite different.

The paper is organized as follows. In Section~\ref{sec:subwordComplexes}, we recall some notions on finite Coxeter systems and subword complexes. Our main results appear in Section~\ref{sec:greedyFlipTree} where we define the greedy facet of a subword complex, construct the greedy flip tree, and describe the greedy flip algorithm.


\section{Subword complexes on Coxeter groups}
\label{sec:subwordComplexes}

\subsection{Coxeter systems}
\label{subsec:CoxeterSystems}

We recall some basic notions on Coxeter systems needed in this paper. More background material can be found in~\cite{Humphreys}.

Let~$V$ be an $n$-dimensional euclidean vector space. For~$v \in V \ssm 0$, we denote by~$s_v$ the reflection interchanging~$v$ and~$-v$ while fixing pointwise the orthogonal hyperplane. We consider a \defn{finite Coxeter group}~$W$ acting on~$V$, \ie a finite group generated by orthogonal reflections of~$V$. We assume without loss of generality that the intersection of all reflecting hyperplanes of~$W$ is reduced to~$0$. 

Computations in~$W$ are simplified by root systems. A \defn{root system} for~$W$ is a set~$\Phi$ of vectors stable by~$W$ and containing precisely two opposite vectors orthogonal to each reflection hyperplane of~$W$. Fix a linear functional~$f:V \to \R$ such that $f(\beta) \ne 0$ for all~$\beta \in \Phi$. It splits the root system~$\Phi$ into the set of \defn{positive roots}~$\Phi^+ \eqdef \set{\beta \in \Phi}{f(\beta)>0}$ and the set of \defn{negative roots}~$\Phi^- \eqdef -\Phi^+$. The \defn{simple roots} are the roots which lie on the extremal rays of the cone generated by~$\Phi^+$. They form a basis~$\Delta$ of the vector space~$V$. The \defn{simple reflections}~$S \eqdef \set{s_\alpha}{\alpha \in \Delta}$ generate the Coxeter group~$W$. The pair~$(W,S)$ is called a \defn{finite Coxeter system}. For~$s \in S$, we let~$\alpha_s$ be the simple root orthogonal to the reflecting hyperplane of~$s$.

The \defn{length} of an element~$w \in W$ is the length~$\ell(w)$ of the smallest expression of~$w$ as a product of the generators in~$S$. It is also known to be the cardinality of the \defn{inversion set} of~$w$, defined as the set~$\inv(w) \eqdef \Phi^+ \cap w^{-1}(\Phi^-)$ of positive roots sent to negative roots by~$w$. An expression~$w = s_1 \cdots s_p$, with $s_1, \dots, s_p \in S$, is \defn{reduced} if~$p = \ell(w)$. The \defn{Demazure product} on the Coxeter system~$(W,S)$ is the function~$\DemazureProduct$ from the words on~$S$ to~$W$ defined inductively by
$$\DemazureProduct(\varepsilon) = e \quad\text{and}\quad \DemazureProduct(\sq{Q}s) = \begin{cases} \DemazureProduct(\sq{Q})s & \text{if } \length(\DemazureProduct(\sq{Q})s) > \length(\DemazureProduct(\sq{Q})) \\ \DemazureProduct(\sq{Q}) & \text{if } \length(\DemazureProduct(\sq{Q})s) < \length(\DemazureProduct(\sq{Q})) \end{cases},$$
where $\varepsilon$ is the empty word and $e$ is the identity of~$W$.

\begin{example}[Type~$A$ --- Symmetric groups]
The symmetric group~$\fS_{n+1}$, acting on the linear hyperplane $\one^\perp \eqdef \set{x \in \R^{n+1}}{\dotprod{\one}{x} = 0}$ by permutation of the coordinates, is the reflection group of \defn{type~$A_n$}. It is the group of isometries of the standard $n$-dimensional regular simplex $\conv \{e_1,\dots,e_{n+1}\}$. See \fref{fig:coxeterArrangement} (left). Its reflections are the transpositions of~$\fS_{n+1}$ and the set~$\set{e_i-e_j}{i,j \in [n+1]}$ is a root system for~$A_n$. We can choose the linear functional~$f$ such that the simple reflections are the adjacent transpositions~$\tau_i \eqdef (i\;\;i+1)$, for~$i \in [n]$, and the simple roots are the vectors~$e_{i+1}-e_i$, for~$i \in [n]$.
\end{example}

\begin{figure}[b]
	\centerline{\includegraphics[width=\textwidth]{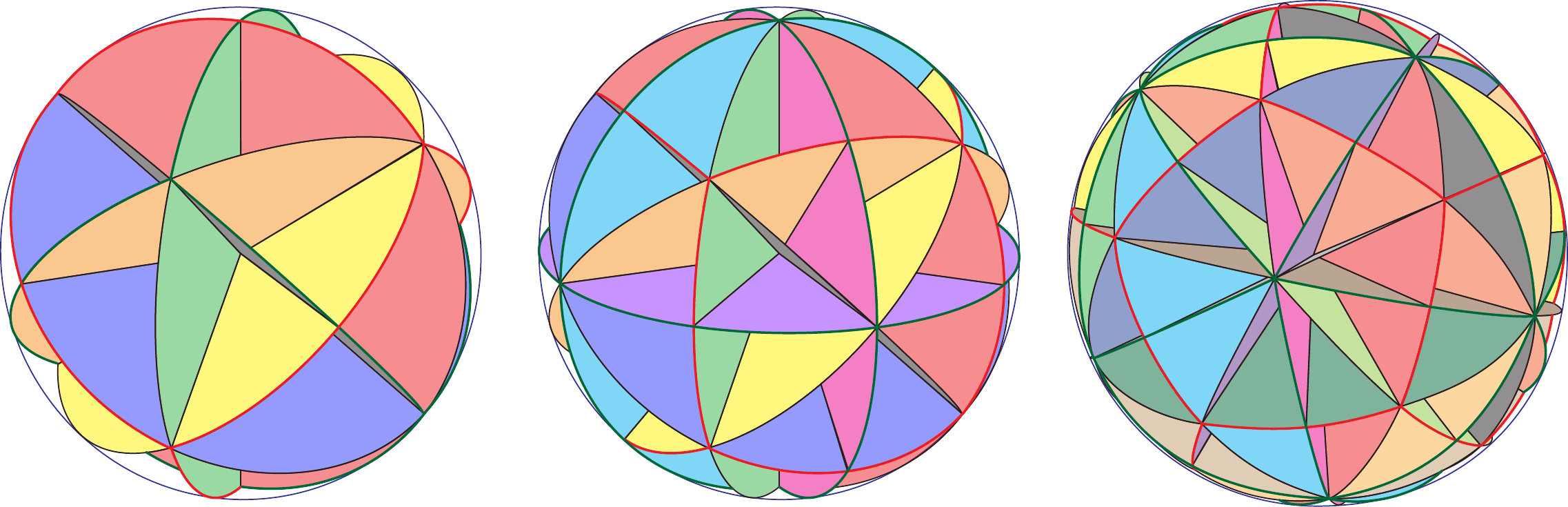}}
	\caption{The $A_3$-, $B_3$-, and $H_3$-arrangements.}
	\label{fig:coxeterArrangement}
\end{figure}

\begin{example}[Type~$B$ --- Hyperoctahedral groups]
The semidirect product of the symmetry group of~$\fS_n$ (acting on~$\R^n$ by permutation of the coordinates) with the group~$(\Z_2)^n$ (acting on~$\R^n$ by sign change) is the reflection group of \defn{type~$B_n$}. It is the isometry group of the \mbox{$n$-dimen}\-sional regular cross-polytope~$\conv \{\pm e_1,\dots,\pm e_n\}$ and of its polar $n$-dimensional regular cube~$[-1,1]^n$. See \fref{fig:coxeterArrangement} (middle). Its reflections are the transpositions of~$\fS_n$ and the changes of one single sign. The set $\set{\pm e_p \pm e_q}{p<q \in [n]} \cup \set{\pm e_p}{p \in [n]}$ is a root system for~$B_n$. We can choose the linear functional~$f$ such that the simple reflections are the adjacent transpositions $\tau_i \eqdef (i\;\;i+1)$, for~$i \in [n-1]$, together with the change~$\chi$ of the first sign, and thus the simple roots are the vectors $e_{i+1}-e_i$, for~$i \in [n-1]$, together with~the~vector~$e_1$.
\end{example}

\begin{example}[Type~$H_3$ --- Icosahedral group]
The isometry group of the regular icosahedron (and of its polar dodecahedron) is a Coxeter group. See \fref{fig:coxeterArrangement}~(right).
\end{example}


\subsection{The subword complex}
\label{subsec:subwordComplex}

Consider a finite Coxeter system~$(W,S)$, a word $\sq{Q} \eqdef q_1q_2 \cdots q_m$ on the generators of~$S$, and an element~$\rho \in W$. A.~Knutson and E.~Miller~\cite{KnutsonMiller-subwordComplex} define the \defn{subword complex}~$\subwordComplex$ to be the simplicial complex of subwords of~$\sq{Q}$ whose complements contain a reduced expression for~$\rho$ as a subword. A vertex of~$\subwordComplex$ is a position in~$\sq{Q}$. We denote by~$[m] \eqdef \{1,2,\dots,m\}$ the set of positions in~$\sq{Q}$. A facet of~$\subwordComplex$ is the complement of a set of positions which forms a reduced expression for~$\rho$ in~$\sq{Q}$. We denote by~$\facets$ the set of facets of~$\subwordComplex$. We write~$\rho \prec \sq{Q}$ when~$\sq{Q}$ contains a reduced expression of~$\rho$, \ie when~$\subwordComplex$ is non-empty.

\begin{example}
\label{exm:toto}
Consider the type~$A$ Coxeter group~$\fS_4$ generated by~$\set{\tau_i}{i \in [3]}$. Let~$\bar{\sq{Q}} \eqdef \tau_2\tau_3\tau_1\tau_3\tau_2\tau_1\tau_2\tau_3\tau_1$ and~$\bar\rho \eqdef [4,1,3,2]$. The reduced expressions of~$\bar\rho$ are $\tau_2\tau_3\tau_2\tau_1$, $\tau_3\tau_2\tau_3\tau_1$, and $\tau_3\tau_2\tau_1\tau_3$. Thus, the facets of the subword complex~$\subwordComplex[\bar{\sq{Q}},\bar\rho]$ are $\{1, 2, 3, 5, 6\}$, $\{1, 2, 3, 6, 7\}$, $\{1, 2, 3, 7, 9\}$, $\{1, 3, 4, 5, 6\}$, $\{1, 3, 4, 6, 7\}$, $\{1, 3, 4, 7, 9\}$, $\{2, 3, 5, 6, 8\}$, $\{2, 3, 6, 7, 8\}$, $\{2, 3, 7, 8, 9\}$, $\{3, 4, 5, 6, 8\}$, $\{3, 4, 6, 7, 8\}$, and $\{3, 4, 7, 8, 9\}$.
We denote by~$\bar I \eqdef \{1,3,4,7,9\}$ and~$\bar J \eqdef \{3,4,7,8,9\}$. We will use this example as a recurrent example in this paper to illustrate further notions.
\end{example}

\begin{example}[Type~$A$ --- Primitive networks]
\label{exm:typeA}

\begin{figure}[b]
	\centerline{\includegraphics[width=.9\textwidth]{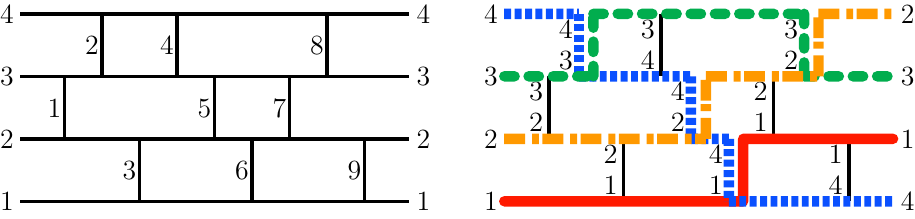}}
	\caption{The network~$\cN_{\bar{\sq{Q}}}$ (left) and the pseudoline arrangement~$\Lambda_{\bar I}$ for the facet~$\bar I = \{1,3,4,7,9\}$ of~$\subwordComplex[\bar{\sq{Q}},\bar\rho]$ (right).}
	\label{fig:network}
\end{figure}

For type~$A$ Coxeter systems, subword complexes can be visually interpreted using primitive networks. A \defn{network}~$\cN$ is a collection of~$n+1$ horizontal lines (called \defn{levels}, and labeled from bottom to top), together with~$m$ vertical segments (called \defn{commutators}, and labelled from left to right) joining two different levels and such that no two of them have a common endpoint. We only consider \defn{primitive} networks, where any commutator joins two consecutive levels. See \fref{fig:network} (left).
A \defn{pseudoline} supported by the network~$\cN$ is an abscissa monotone path on~$\cN$. A commutator of~$\cN$ is a \defn{crossing} between two pseudolines if it is traversed by both pseudolines, and a \defn{contact} if its endpoints are contained one in each pseudoline. A \defn{pseudoline arrangement}~$\Lambda$ is a set of~$n+1$ pseudolines on~$\cN$, any two of which have at most one crossing, possibly some contacts, and no other intersection. We label the pseudolines of~$\Lambda$ from bottom to top on the left of the network, and we define~$\pi(\Lambda) \in \fS_{n+1}$ to be the permutation given by the order of these pseudolines on the right of the network. Note that the crossings of~$\Lambda$ correspond to the inversions of~$\pi(\Lambda)$. See \fref{fig:network} (right).

Consider the type~$A$ Coxeter group~$\fS_{n+1}$ generated by~$S = \set{\tau_i}{i \in [n]}$, where~$\tau_i$ is the adjacent transposition~$(i\;\;i+1)$. To a word~$\sq{Q} \eqdef q_1q_2 \cdots q_m$ with~$m$ letters on~$S$, we associate a primitive network~$\cN_\sq{Q}$ with~$n+1$ levels and~$m$ commutators. If~$q_j = \tau_p$, the $j$\ordinal{} commutator of~$\cN_\sq{Q}$ is located between the $p$\ordinal{} and $(p+1)$\ordinal{} levels of~$\cN_\sq{Q}$. See \fref{fig:network} (left). For~$\rho \in \fS_{n+1}$, a facet~$I$  of~$\subwordComplex$ corresponds to a pseudoline arrangement~$\Lambda_I$ supported by~$\cN_\sq{Q}$ and with~$\pi(\Lambda_I) = \rho$. The positions of the contacts (resp.~crossings) of~$\Lambda_I$ correspond to the elements of~$I$ (resp.~of the complement of~$I$). See \fref{fig:network} (right).
\end{example}

\begin{example}[Combinatorial models for geometric graphs]
\label{exm:geometricGraphs}
As pointed out in~\cite{PilaudPocchiola}, pseudoline arrangements on primitive networks give combinatorial models for the following families of geometric graphs (see \fref{fig:geometricGraphs}):
\begin{enumerate}[(i)]
\item triangulations of convex polygons;
\item multitriangulations of convex polygons~\cite{PilaudSantos-multitriangulations};
\item pointed pseudotriangulations of points in general position in the plane~\cite{RoteSantosStreinu-survey};
\item pseudotriangulations of disjoint convex bodies in the plane~\cite{PocchiolaVegter}.
\end{enumerate}
For example, consider a triangulation~$T$ of a convex~$(n+3)$-gon. Define the direction of a line of the plane to be the angle~$\theta \in [0,\pi)$ of this line with the horizontal axis. Define also a bisector of a triangle~$\triangle$ to be a line passing through a vertex of~$\triangle$ and separating the other two vertices of~$\triangle$. For any direction~$\theta \in [0,\pi)$, each triangle of~$T$ has precisely one bisector in direction~$\theta$. We can thus order the~$n+1$ triangles of~$T$ according to the order~$\pi_\theta$ of their bisectors in direction~$\theta$. The pseudoline arrangement associated to~$T$ is then given by the evolution of the order~$\pi_\theta$ when the direction~$\theta$ describes the interval~$[0,\pi)$. A similar duality holds for the other three families of graphs, replacing triangles by the natural cells decomposing the geometric graph (stars for multitriangulations~\cite{PilaudSantos-multitriangulations}, or pseudotriangles for pseudotriangulations~\cite{RoteSantosStreinu-survey}). See \fref{fig:geometricGraphs} for an illustration. Details can be found in~\cite{PilaudPocchiola}.

\begin{figure}[h]
	\centerline{\includegraphics[width=\textwidth]{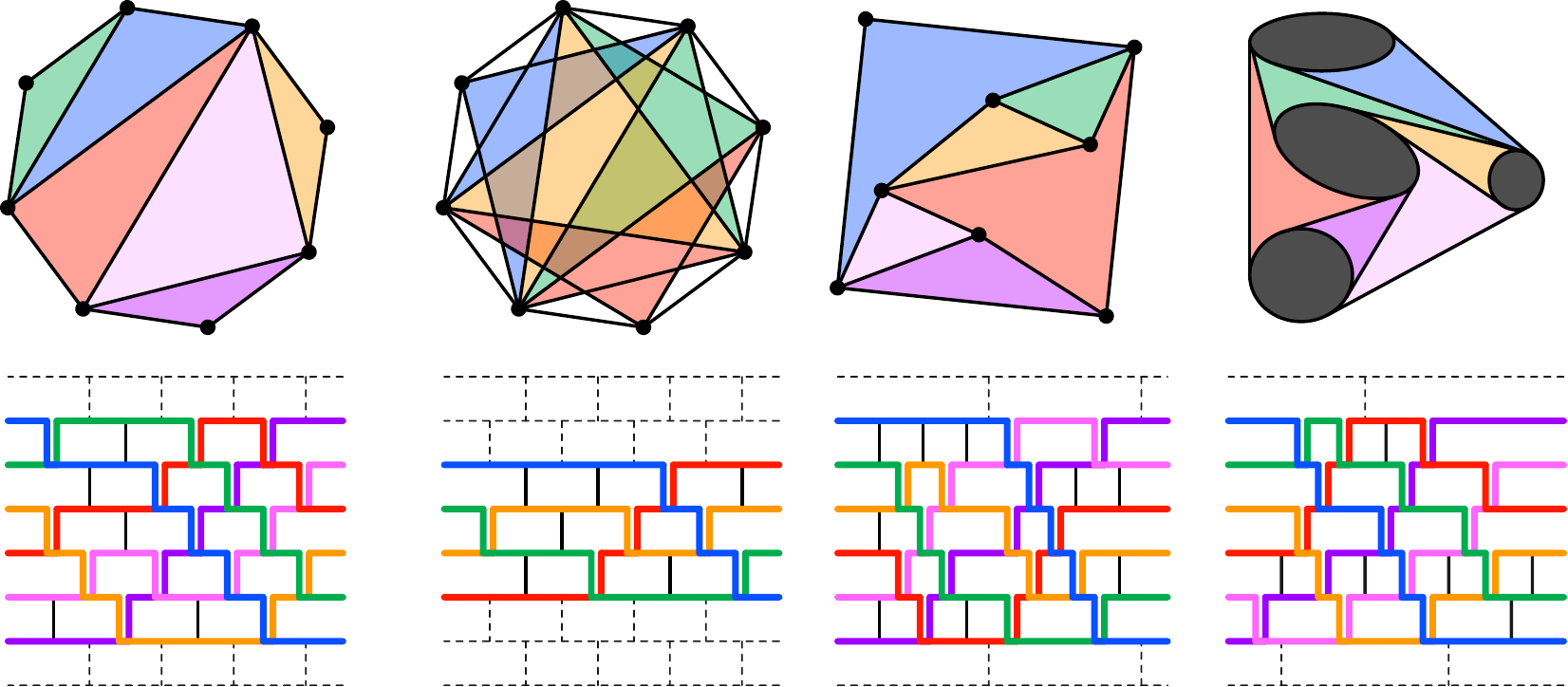}}
	\caption{Primitive sorting networks are combinatorial models for certain families of geometric graphs.}
	\label{fig:geometricGraphs}
\end{figure}
\end{example}

\begin{example}[Type~$B$ --- Symmetric primitive networks]
Consider the type~$B$ Coxeter group~$\fS_n \rtimes (\Z_2)^n$ acting on~$\R^n$ and generated by~${S = \set{\tau_i}{i \in [n-1]} \cup \chi}$, where~$\tau_i$ exchange the $i$\ordinal{} and~$(i+1)$\ordinal{} coordinates, and~$\chi$ changes the sign of the first coordinate. To a word~$\sq{Q} \eqdef q_1q_2 \cdots q_m$ on~$S$ with~$m$ letters and~$x$ occurrences of~$\chi$, we associate a primitive network~$\cN_\sq{Q}$ with~$2n$ levels and~$2m-x$ commutators, which is symmetric with respect to the horizontal axis. The levels of~$\cN_\sq{Q}$ are labeled by $-n,\dots,-1,1,\dots,n$ from bottom to top. An occurrence of~$\tau_i$ is replaced by a pair of symmetric commutators between~$-i-1$ and~$-i$ and between~$i$ and~$i+1$, and an occurrence of~$\chi$ is replaced by a commutator between~$-1$ and~$1$. A facet~$I$ of the subword complex~$\subwordComplex$ is represented by a symmetric pseudoline arrangement~$\Lambda_I$ supported by~$\cN_\sq{Q}$ whose contacts correspond to the positions in~$I$. If the pseudolines of~$\Lambda_I$ are labeled by~$-n,\dots,-1,1,\dots,n$ from bottom to top on the left of~$\cN_\sq{Q}$, then their order on the right of~$\cN_\sq{Q}$ is given by $-\rho(n),\dots,-\rho(1),\rho(1),\dots,\rho(n)$.
\end{example}

\begin{example}[Combinatorial models for centrally symmetric geometric graphs]
Type~$B$ subword complexes provide combinatorial models for the centrally symmetric versions of the geometric graphs of Example~\ref{exm:geometricGraphs}. Indeed, the central symmetry of a geometric graph translates into an horizontal symmetry on its dual pseudoline arrangement. See also the discussion in~\cite{CeballosLabbeStump} in particular the dictionnary in Table~2.
\end{example}


\subsection{Generating the subword complex}
\label{subsec:generationgSubwordComplex}

In this paper, we discuss the problem to exhaustively generate the set~$\facets$ of facets of the subword complex~$\subwordComplex$. We underline in this section two immediate enumeration algorithms which illustrate relevant properties of the subword complex.

For the evaluation of the time and space complexity of the different enumeration algorithms, we consider as parameters the rank~$n$ of the Coxeter group~$W$, the size~$m$ of the word~$\sq{Q}$, and the length~$\ell$ of the element~$\rho$. None of these parameters can be considered to be constant a priori. For example, if we want to generate all triangulations of a convex $(n+3)$-gon (see Example~\ref{exm:geometricGraphs}), we consider a subword complex with a group~$W$ of rank~$n$, a word~$\sq{Q}$ of size~${n(n+3)/2}$, and an element~$\rho$ of length~${n(n+1)/2}$.

\paragraph{Inductive structure}
The first method to generate~$\facets$ relies on the inductive structure of the family of subword complexes. Throughout this paper, we denote by~$\sq{Q}_\eraseFirst \eqdef q_2 \cdots q_m$ and~$\sq{Q}_\eraseLast \eqdef q_1 \cdots q_{m-1}$ the words on~$S$ obtained from~$\sq{Q} \eqdef q_1 \cdots q_m$ by deleting its first and last letters respectively. For a set~$\cX$ of subsets of~$\Z$, we denote by~$\cX \join z \eqdef z \join \cX \eqdef \set{X \cup z}{X \in \cX}$ the join of~$\cX$ with some~$z \in \Z$. Moreover, let~$\shiftRight{\cX} \eqdef \set{\shiftRight{X}}{X \in \cX}$, where~$\shiftRight{X} \eqdef \set{x+1}{x \in X}$ denotes the right shift of the set~$X \in \cX$. Remember that $\ell(\rho)$ denotes the length of~$\rho$ and that we write~$\rho \prec \sq{Q}$ when~$\sq{Q}$ contains a reduced expression of~$\rho$.

We can decompose inductively the facets of~$\facets$ according on whether or not they contain the last letter of~$\sq{Q}$:
\begin{equation}
\label{eq:inductionRight}
\facets =
\begin{cases}
	\facets[\sq{Q}_\eraseLast,\rho q_m] & \text{if } \rho \not\prec \sq{Q}_\eraseLast, \\
	\facets[\sq{Q}_\eraseLast, \rho] \join m & \text{if } \ell(\rho q_m) > \ell(\rho), \\
	\facets[\sq{Q}_\eraseLast,\rho q_m] \, \sqcup \, \big( \facets[\sq{Q}_\eraseLast,\rho] \join m \big) & \text{otherwise}.
\end{cases}
\end{equation}
For later reference, let us also explicitly write the inductive decomposition of the facets of~$\facets$ according on whether or not they contain the first letter of~$\sq{Q}$:

\begin{equation}
\label{eq:inductionLeft}
\facets =
\begin{cases}
	\shiftRight{\facets[\sq{Q}_\eraseFirst,q_1\rho]} & \text{if } \rho \not\prec \sq{Q}_\eraseFirst \\
	1 \join \shiftRight{\facets[\sq{Q}_\eraseFirst,\rho]} & \text{if } \ell(q_1\rho) > \ell(\rho), \\
	\shiftRight{\facets[\sq{Q}_\eraseFirst,q_1\rho]} \, \sqcup \, \big( 1 \join \shiftRight{\facets[\sq{Q}_\eraseFirst,\rho]} \big) & \text{otherwise}.
\end{cases}
\end{equation}

The inductive structure of~$\subwordComplex$ yields an inductive algorithm for the enumeration of~$\facets$, whose running time per facet is polynomial. More precisely, since all subword complexes which appear in the different cases of the induction formula~(\ref{eq:inductionRight}) are non-empty, and since the tests~$\rho \not\prec \sq{Q}_\eraseLast$ and~$\ell(\rho q_m) > \ell(\rho)$ can be performed in~$O(mn)$ time, the running time per facet of this inductive algorithm is in~$O(m^2n)$.

The inductive structure of~$\subwordComplex$ is moreover useful for the following result.
\begin{theorem}[\cite{KnutsonMiller-subwordComplex}]
\label{theo:KnutsonMiller}
The subword complex~$\subwordComplex$ is a topological sphere if~$\rho$ is precisely the Demazure product~$\DemazureProduct(\sq{Q})$ of~$\sq{Q}$, and a topological ball otherwise.
\end{theorem}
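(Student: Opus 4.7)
My plan is to prove Theorem~\ref{theo:KnutsonMiller} by induction on the length $m$ of the word $\sq{Q}$, using the facet decomposition~(\ref{eq:inductionRight}) to realize $\subwordComplex$ as a vertex-decomposable simplicial complex whose link and deletion at the vertex $m$ are smaller subword complexes. The base case $m=0$ forces $\rho = e = \DemazureProduct(\varepsilon)$ and gives $\subwordComplex = \{\emptyset\}$, the $(-1)$-sphere.

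For the inductive step, I would analyze the three cases of~(\ref{eq:inductionRight}) separately. In the first case, where $\rho \not\prec \sq{Q}_\eraseLast$, the position $m$ lies in no facet, so $\subwordComplex$ is isomorphic to $\subwordComplex[\sq{Q}_\eraseLast, \rho q_m]$, and the conclusion follows from the induction hypothesis together with the observation (immediate from the recursion defining $\DemazureProduct$) that $\rho = \DemazureProduct(\sq{Q})$ if and only if $\rho q_m = \DemazureProduct(\sq{Q}_\eraseLast)$. In the second case, where $\length(\rho q_m) > \length(\rho)$, no reduced expression for $\rho$ inside $\sq{Q}$ can use the letter $q_m$, so every facet contains $m$, making $\subwordComplex$ the simplicial cone with apex $m$ over $\subwordComplex[\sq{Q}_\eraseLast, \rho]$; such a cone is always a topological ball, and one checks directly that the hypothesis $\length(\rho q_m) > \length(\rho)$ excludes $\rho = \DemazureProduct(\sq{Q})$, in agreement with the statement.

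The third case, where $\rho \prec \sq{Q}_\eraseLast$ and $\length(\rho q_m) < \length(\rho)$, is the delicate one. Here I would set $v = m$ and identify the star, link, and deletion of $v$ in $\subwordComplex$: the link coincides with $\subwordComplex[\sq{Q}_\eraseLast, \rho]$, the star is its simplicial cone (hence a ball by induction), and the deletion is the subcomplex of faces of $\subwordComplex$ avoiding $m$, whose facets are exactly those of $\subwordComplex[\sq{Q}_\eraseLast, \rho q_m]$. The complex $\subwordComplex$ is then the union of the star and the deletion, glued along the link. Applying the induction hypothesis to both smaller subword complexes and invoking the standard principle that gluing two balls along a common boundary subcomplex yields a ball (if the common subcomplex is a proper piece of a boundary) or a sphere (if it is the full boundary) completes the topological step.

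The main obstacle is in this third case, and specifically in translating the Demazure condition into the geometric condition that controls the outcome of the gluing: one must show that the link $\subwordComplex[\sq{Q}_\eraseLast, \rho]$ covers the entire boundary of the ball $\subwordComplex[\sq{Q}_\eraseLast, \rho q_m]$ precisely when $\rho = \DemazureProduct(\sq{Q})$. This equivalence, where the algebra of the Coxeter group interacts most subtly with the topology, would be established by unpacking how reduced expressions for $\rho$ and $\rho q_m$ inside $\sq{Q}_\eraseLast$ relate, and by tracking the Demazure product recursion through the induction.
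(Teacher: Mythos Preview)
The paper does not prove Theorem~\ref{theo:KnutsonMiller}; it is quoted from \cite{KnutsonMiller-subwordComplex} and used as a black box. So there is no ``paper's own proof'' to compare against, and your proposal is really an attempt at reconstructing the original Knutson--Miller argument rather than at matching anything in the present paper.

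That said, your outline is close to the original proof. Knutson and Miller show that $\subwordComplex$ is vertex-decomposable, taking exactly the vertex $m$ you chose, and identifying the link and deletion with $\subwordComplex[\sq{Q}_\eraseLast,\rho]$ and $\subwordComplex[\sq{Q}_\eraseLast,\rho q_m]$ respectively (your identifications of these two subcomplexes are correct, including the containment $\subwordComplex[\sq{Q}_\eraseLast,\rho]\subset\subwordComplex[\sq{Q}_\eraseLast,\rho q_m]$, which follows from $\rho q_m<\rho$ in Bruhat order). Vertex-decomposability gives shellability, and then the ball/sphere dichotomy is read off from the Euler characteristic or, equivalently, from an explicit description of the boundary.

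Where your sketch is genuinely incomplete is precisely the point you flag: in case~(iii) you need that the link $\subwordComplex[\sq{Q}_\eraseLast,\rho]$ coincides with the \emph{entire} boundary of the ball $\subwordComplex[\sq{Q}_\eraseLast,\rho q_m]$ exactly when $\rho=\DemazureProduct(\sq{Q})$, and sits as a codimension-zero sub-ball of that boundary otherwise. This is not a formality: one has to know that the boundary of a subword-complex ball $\subwordComplex[\sq{Q'},\rho']$ consists of those faces $F$ with $\DemazureProduct(\sq{Q'}\ssm F)>\rho'$, a statement that itself is part of the Knutson--Miller package and is typically established alongside the induction rather than invoked from outside. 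You also need, for the gluing, that when the link is a ball its boundary matches up correctly inside $\partial\bigl(\subwordComplex[\sq{Q}_\eraseLast,\rho q_m]\bigr)$ and inside $\partial\bigl(m*\subwordComplex[\sq{Q}_\eraseLast,\rho]\bigr)$; appealing to a ``standard principle'' here hides a PL argument that deserves at least a reference. In short, the architecture is right and the reductions via the Demazure recursion are handled correctly, but the topological gluing in case~(iii) would need either the explicit boundary description or an Euler-characteristic computation to be made into a proof.
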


\paragraph{The flip graph}
The second direct method to generate~$\facets$ relies on flips. Let~$I$ be a facet of~$\subwordComplex$ and~$i$ be an element of~$I$. If there exists a facet~$J$ of $\subwordComplex$ and an element~$j \in J$ such that~$I \ssm i = J \ssm j$, we say that $I$~and~$J$ are \defn{adjacent} facets, that~$i$ is \defn{flippable} in~$I$, and that~$J$ is obtained from~$I$ by \defn{flipping}~$i$. Note that, if they exist,~$J$ and~$j$ are unique by Theorem~\ref{theo:KnutsonMiller}. We denote by~$\flipGraph$ the graph of flips, whose vertices are the facets of~$\subwordComplex$ and whose edges are pairs of adjacent facets. In other words, $\flipGraph$~is the ridge graph of the simplicial complex~$\subwordComplex$.

\begin{example}
\fref{fig:flipGraph} represents the flip graph~$\flipGraph[\bar{\sq{Q}},\bar\rho]$ for the subword complex~$\subwordComplex[\bar{\sq{Q}}, \bar\rho]$ of Example~\ref{exm:toto}.
\begin{figure}[h]
	\centerline{\includegraphics[width=\textwidth]{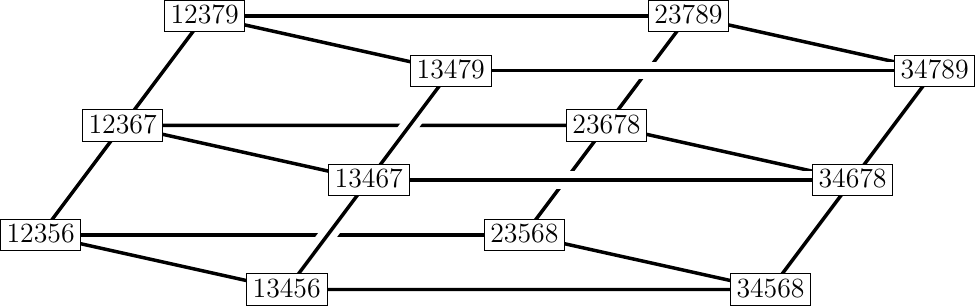}}
	\caption{The flip graph~$\flipGraph[\bar{\sq{Q}},\bar\rho]$. Facets of~$\subwordComplex[\bar{\sq{Q}},\bar\rho]$ appear in lexicographic order from left to right.}
	\label{fig:flipGraph}
\end{figure}
\end{example}

Since the flip graph~$\flipGraph$ is connected by Theorem~\ref{theo:KnutsonMiller}, we can explore it to generate~$\facets$. Since~$\flipGraph$ has degree bounded by~$m-\ell(\rho)$, we need~${O(m-\ell(\rho))}$ flips per facet for this exploration. However, we need to store all facets of~$\subwordComplex$ during the algorithm, which may require an exponential working space. This happens for example if we want to generate the~$\frac{1}{n+2}{2n+2 \choose n+1}$ triangulations of a convex $(n+3)$-gon (see Example~\ref{exm:geometricGraphs}).

\medskip
In this paper, we present the \emph{greedy flip algorithm} to generate the facets of the subword complex~$\subwordComplex$. This algorithm explores a spanning tree of the graph of flips~$\flipGraph$, which we call the \emph{greedy flip tree}. The construction of this tree is based on \emph{greedy facets} in subword complexes and on their inductive structure, similar to the inductive structure of the subword complexes described above. The running time per facet of the greedy flip algorithm is also in~$O(m^2n)$, while its working space is in~$O(mn)$. We compare experimental running times of the inductive algorithm and of the greedy flip algorithm later in Section~\ref{subsec:greedyFlipTree}.


\subsection{Roots and flips}
\label{subsec:roots&flips}

Throughout the paper, we consider a flip in a subword complex as an elementary operation to measure the complexity of our algorithm. In practice, the necessary information to perform flips in a facet~$I$ of~$\subwordComplex$ is encoded in its root function~$\Root{I}{\cdot} : [m] \to \Phi$ defined by
$$\Root{I}{k} \eqdef \sigma_{[k-1] \ssm I}(\alpha_{q_k}),$$
where~$\sigma_X$ denotes the product of the reflections~$q_x$ for~$x \in X$. The \defn{root configuration} of the facet~$I$ is the multiset~$\Roots{I} \eqdef \multiset{\Root{I}{i}}{i \text{ flippable in } I}$. The root function was introduced by C.~Ceballos, J.-P.~Labb\'e and C.~Stump~\cite{CeballosLabbeStump} and the root configuration was extensively studied by V.~Pilaud and C.~Stump~\cite{PilaudStump} in the construction of brick polytopes for spherical subword complexes. The main properties of the root function are summarized in the following proposition, whose proof is similar to that of~\cite[Lemmas~3.3 and 3.6]{CeballosLabbeStump} or~\cite[Lemma~3.3]{PilaudStump}.

\begin{proposition}
\label{prop:roots&flips}
Let~$I$ be any facet of the subword complex~$\subwordComplex$.
\begin{enumerate}
\item
\label{prop:roots&flips:inversions}
The map~$\Root{I}{\cdot}:i \mapsto \Root{I}{i}$ is a bijection from the complement of~$I$ to the inversion set of~$\rho^{-1}$.

\item
\label{prop:roots&flips:flippable}
The map~$\Root{I}{\cdot}$ sends the flippable elements in~$I$ to~$\set{\pm \beta}{\beta \in \inv(\rho^{-1})}$ and the unflippable ones to~$\Phi^+ \ssm \inv(\rho^{-1})$.

\item
\label{prop:roots&flips:flip}
If $I$~and~$J$ are two adjacent facets of~$\subwordComplex$ with~$I \ssm i = J \ssm j$, the position $j$ is the unique position in the complement of~$I$ for which~${\Root{I}{j} = \pm\Root{I}{i}}$. Moreover, $\Root{I}{i} = \Root{I}{j} \in \Phi^+$ when $i < j$, while $\Root{I}{i} = -\Root{I}{j} \in \Phi^-$ when $j < i$.

\item
\label{prop:roots&flips:update}
In the situation of (\ref{prop:roots&flips:flip}), the map~$\Root{J}{\cdot}$ is obtained from the map~$\Root{I}{\cdot}$ by:
$$\Root{J}{k} = \begin{cases} s_{\Root{I}{i}}(\Root{I}{k}) & \text{if } \min(i,j) < k \le \max(i,j) \\ \Root{I}{k} & \text{otherwise.} \end{cases}$$
\end{enumerate}
\end{proposition}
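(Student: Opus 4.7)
My plan is to establish the four parts by first analyzing the structure of the root function on a fixed facet, and then invoking the strong exchange condition for reduced expressions to characterize flips.

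For part~(\ref{prop:roots&flips:inversions}), I would write the complement of~$I$ as $k_1 < k_2 < \cdots < k_\ell$ with $\ell = \length(\rho)$, so that $q_{k_1} q_{k_2} \cdots q_{k_\ell}$ is a reduced expression for~$\rho$. The definition of $\Root{I}{k_j}$ then telescopes to $q_{k_1} \cdots q_{k_{j-1}}(\alpha_{q_{k_j}})$, which is the classical formula attaching a root to each letter of a reduced expression. A standard result of Coxeter theory guarantees that these~$\ell$ roots are distinct and form exactly~$\inv(\rho^{-1})$.

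For parts~(\ref{prop:roots&flips:flippable}) and~(\ref{prop:roots&flips:flip}) I would argue jointly. Fix $i \in I$, locate it in the gap pattern as $k_a < i < k_{a+1}$, and note that $\Root{I}{i} = q_{k_1} \cdots q_{k_a}(\alpha_{q_i})$. Flipping~$i$ amounts to finding a position $j$ in the complement of~$I$ such that inserting $q_i$ into the reduced expression at slot~$a+1$ and removing one $q_{k_b}$ still produces a reduced expression for~$\rho$. The strong exchange condition says that such a removal is possible iff the augmented word $q_{k_1} \cdots q_{k_a} \, q_i \, q_{k_{a+1}} \cdots q_{k_\ell}$ is non-reduced, which is equivalent to $\Root{I}{i}$ being $\pm$ a root in~$\inv(\rho^{-1})$; moreover, when this occurs, the slot to remove is uniquely determined as the index whose associated inversion root equals $\Root{I}{i}$ up to sign. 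Translating this index back to a position of~$\sq{Q}$ produces the unique flip partner~$j$, and following through the sign of the match yields the dichotomy $\Root{I}{i} = \Root{I}{j} \in \Phi^+$ when $i < j$ versus $\Root{I}{i} = -\Root{I}{j} \in \Phi^-$ when $j < i$.

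Finally, for part~(\ref{prop:roots&flips:update}) I would directly compare $\Root{J}{k}$ and $\Root{I}{k}$ from their definitions. The symmetric difference of $[k-1] \ssm I$ and $[k-1] \ssm J$ is empty unless $\min(i,j) < k \le \max(i,j)$, in which case it consists of the single position~$\min(i,j)$; splitting the product of reflections at this position and telescoping shows that $\Root{J}{k}$ is obtained from $\Root{I}{k}$ by conjugation with $s_{\Root{I}{i}}$, matching the stated formula. The main obstacle I anticipate is part~(\ref{prop:roots&flips:flip}): proving that the matching position~$j$ exists, is unique, and satisfies the sign convention requires careful bookkeeping with the strong exchange condition when passing between the inserted letter in the reduced expression and actual positions of~$\sq{Q}$. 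Once this bridge is in place, the remaining parts reduce to direct computations.
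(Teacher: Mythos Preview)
The paper does not actually prove this proposition; it only refers the reader to \cite[Lemmas~3.3 and~3.6]{CeballosLabbeStump} and \cite[Lemma~3.3]{PilaudStump}. Your outline is essentially the standard argument carried out in those references: part~(1) via the inversion sequence attached to a reduced word, parts~(2)--(3) via the strong exchange condition, and part~(4) by comparing prefix products.

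One correction in your treatment of part~(4): for $k>\max(i,j)$ the symmetric difference of $[k-1]\ssm I$ and $[k-1]\ssm J$ is $\{i,j\}$, not empty. The equality $\Root{J}{k}=\Root{I}{k}$ in that range holds because the prefix products $\sigma_{[k-1]\ssm I}$ and $\sigma_{[k-1]\ssm J}$ agree as elements of~$W$ --- the exchange relation underlying the flip yields $q_{k_1}\cdots q_{k_a}\,q_i\,q_{k_{a+1}}\cdots q_{k_{b-1}} = q_{k_1}\cdots q_{k_b}$ --- not because the underlying index sets coincide. With this fix your argument goes through.
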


Observe that this proposition ensures in particular that we can perform flips in the subword complex~$\subwordComplex$ in~$O(mn)$ time if we store and update the facets of~$\subwordComplex$ together with their root functions. Note that this storage requires~$O(mn)$ space.

\begin{figure}[b]
	\centerline{\includegraphics[width=.9\textwidth]{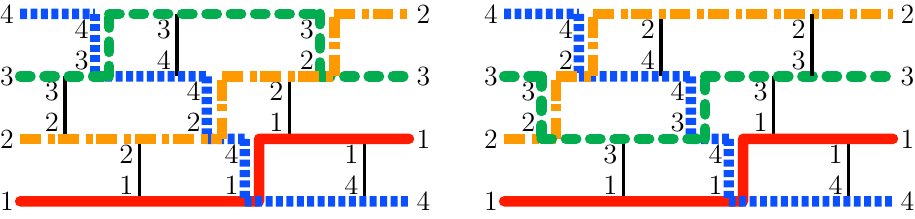}}
	\caption{The flip between the adjacent facets~$\bar I = \{1,3,4,7,9\}$ and $\bar J = \{3,4,7,8,9\}$ of~$\subwordComplex[\bar{\sq{Q}},\bar\rho]$, illustrated on the network~$\cN_{\bar{\sq{Q}}}$.}
	\label{fig:flip}
\end{figure}

\begin{example}
In type~$A$ (and~$B$), roots and flips are easily read on the primitive network interpretation presented in Example~\ref{exm:typeA}. Consider a word~$\sq{Q}$ on~$\set{\tau_i}{i \in [n]}$, an element~$\rho \in \fS_{n+1}$, and a facet~$I$ of~$\subwordComplex$. For any~$k \in [m]$, the root~$\Root{I}{k}$ is the difference~$e_t-e_b$ where~$t$ and~$b$ are the indices of the pseudolines of~$\Lambda_I$ which arrive respectively on the top and bottom endpoints of the $k$\ordinal{} commutator of~$\cN_\sq{Q}$. \fref{fig:flip} illustrates the properties of Proposition~\ref{prop:roots&flips} on the subword complex~$\subwordComplex[\bar{\sq{Q}}, \bar\rho]$ of Example~\ref{exm:toto}.
\end{example}


\section{The greedy flip tree}
\label{sec:greedyFlipTree}

\subsection{Increasing flips and greedy facets}
\label{subsec:greedyFacets}

Let~$I$ and~$J$ be two adjacent facets of~$\subwordComplex$ with~$I \ssm i = J \ssm j$. We say that the flip from~$I$ to~$J$ is \defn{increasing} if~$i<j$. This is equivalent to~$\Root{I}{i} \in \Phi^+$ by Proposition~\ref{prop:roots&flips}. We now consider the flip graph $\flipGraph$ oriented by increasing flips.

\begin{proposition}
\label{prop:increasingFlipGraph}
The graph~$\flipGraph$ of increasing flips is acyclic. The lexicographically smallest (resp.~largest) facet of~$\subwordComplex$ is the unique source (resp.~sink) of~$\flipGraph$.
\end{proposition}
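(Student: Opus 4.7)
For the acyclicity, I would order the facets lexicographically (with $I < J$ iff the smallest element of $I \symdif J$ lies in $I$, equivalently by comparing sorted tuples of positions) and check that every increasing flip respects this order. If $I \to J$ is an increasing flip with $I \ssm i = J \ssm j$ and $i < j$, then $I \symdif J = \{i, j\}$ and its smallest element $i$ lies in $I$, so $I < J$. The orientation of $\flipGraph$ is therefore compatible with a strict total order, which forbids directed cycles. Consequently, the lex smallest facet $I_{\min}$ has no lex-smaller neighbor, hence no incoming increasing flip, and is a source; symmetrically the lex largest facet $I_{\max}$ is a sink.

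Uniqueness of source and sink is shown by induction on $m = |\sq{Q}|$ via the decomposition~(\ref{eq:inductionLeft}), splitting $\facets$ into $\mathcal{A} \eqdef \set{I \in \facets}{1 \in I}$ and $\mathcal{B} \eqdef \set{I \in \facets}{1 \notin I}$. When one of $\mathcal{A}, \mathcal{B}$ is empty, the full flip DAG matches (after relabeling) that of a smaller subword complex on $\sq{Q}_\eraseFirst$, and the induction hypothesis applies directly. Otherwise the induction hypothesis endows the restricted DAGs on $\mathcal{A}$ and $\mathcal{B}$ with unique sources and sinks (namely their own lex min and lex max). Every inter-block flip trades position~$1$ for a larger position and is therefore increasing, so all inter-block edges are oriented from $\mathcal{A}$ to $\mathcal{B}$. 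The overall lex min lies in $\mathcal{A}$ and coincides with the source of the $\mathcal{A}$-part; symmetrically the overall lex max lies in $\mathcal{B}$ and coincides with the sink of the $\mathcal{B}$-part. To conclude, two shadow extrema must be ruled out: the inductive source $I_{\mathcal{B}}^{\min}$ of the $\mathcal{B}$-part must receive an inter-block edge from $\mathcal{A}$, and symmetrically the inductive sink $I_{\mathcal{A}}^{\max}$ of the $\mathcal{A}$-part must emit one into $\mathcal{B}$.

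This last step is the main obstacle. By Proposition~\ref{prop:roots&flips}(\ref{prop:roots&flips:flip}), an inter-block edge from $\mathcal{A}$ into $I_{\mathcal{B}}^{\min}$ exists iff some $j \in I_{\mathcal{B}}^{\min}$ satisfies $\Root{I_{\mathcal{B}}^{\min}}{j} = -\alpha_{q_1}$, pairing with $\Root{I_{\mathcal{B}}^{\min}}{1} = \alpha_{q_1}$; here $\alpha_{q_1} \in \inv(\rho^{-1})$ because this configuration forces $\ell(q_1\rho) < \ell(\rho)$. Using Proposition~\ref{prop:roots&flips}(\ref{prop:roots&flips:update}) to track the root function across the shift of~(\ref{eq:inductionLeft}), this condition reduces to exhibiting an element of the lex min facet of $\subwordComplex[\sq{Q}_\eraseFirst, q_1\rho]$ whose root value is $\alpha_{q_1}$. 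Since $\alpha_{q_1} \in \inv(\rho^{-1}) \ssm \inv((q_1\rho)^{-1})$, Proposition~\ref{prop:roots&flips}(\ref{prop:roots&flips:inversions}) forbids this root from appearing on the complement of that facet, so if it occurs as a root value at all it must lie on the facet itself; I anticipate pinning down its actual occurrence via a greedy characterization of the lex min facet. The sink case is handled symmetrically using~(\ref{eq:inductionRight}).
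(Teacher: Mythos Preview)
Your acyclicity argument via the lexicographic order is correct and essentially matches the paper's (which phrases it through a slightly more general partial order, but lex order suffices). The identification of the lex extremes as a source and a sink is also fine.

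The gap is in the uniqueness argument, and it comes from pairing the wrong decomposition with the wrong extremum. You use~(\ref{eq:inductionLeft}) for the source and defer to~(\ref{eq:inductionRight}) for the sink; the paper does the opposite, and for good reason. With your choice, ruling out $I_{\mathcal{B}}^{\min}$ as a second source requires exhibiting a position $j \in I_{\mathcal{B}}^{\min}$ with root $-\alpha_{q_1}$, which (as you correctly trace) amounts to finding a position of the lex-min facet of $\subwordComplex[\sq{Q}_\eraseFirst, q_1\rho]$ whose root value is $\alpha_{q_1}$. This is true, but it is essentially the greedy flip property (Proposition~\ref{prop:gfp}), which the paper proves only \emph{after} the present proposition and whose very statement relies on the uniqueness you are trying to establish. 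Your ``anticipated greedy characterization'' is therefore either circular or requires importing a nontrivial lemma you have not proved.

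The fix is to swap the decompositions. For the source, use~(\ref{eq:inductionRight}): if $X$ is any source and $\ell(\rho q_m) < \ell(\rho)$, then $m \notin X$, because whenever $m \in X$ one has $\Root{X}{m} = \sigma_{[m-1]\ssm X}(\alpha_{q_m}) = \rho(\alpha_{q_m}) \in \Phi^-$, making $m$ flippable with a decreasing flip. The key point is that the root at the \emph{last} position equals $\rho(\alpha_{q_m})$ for \emph{every} facet containing $m$, so the obstruction is uniform and no facet-specific analysis is needed. Hence every source lies in $\facets[\sq{Q}_\eraseLast, \rho q_m]$ and induction finishes. Symmetrically, for the sink use~(\ref{eq:inductionLeft}): the root at position $1$ is $\alpha_{q_1} \in \Phi^+$ for every facet, so whenever $\ell(q_1\rho)<\ell(\rho)$ every facet containing $1$ admits an increasing flip and cannot be a sink. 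Incidentally, this is exactly your ``$I_{\mathcal{A}}^{\max}$ must emit an inter-block edge'' step, which you flagged as an obstacle; it is in fact immediate once you notice the argument applies uniformly to all of $\mathcal{A}$, not just to its maximum.
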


\begin{proof}
The graph~$\flipGraph$ is acyclic, since it is a subgraph of the Hasse diagram of the order defined by~$I \le J$ iff there is a bijection~$\phi:I \to J$ such that~$i \le \phi(i)$ for all~$i \in I$. The lexicographically smallest facet is a source of~$\flipGraph$ since none of its flips can be decreasing. We prove that this source is unique by induction the word on~$\sq{Q}$. Denote by~$X(\sq{Q}_\eraseLast, \rho)$ (resp.~$X(\sq{Q}_\eraseLast, \rho q_m)$) the lexicographically smallest facet of~$\subwordComplex[\sq{Q}_\eraseLast, \rho]$ (resp.~$\subwordComplex[\sq{Q}_\eraseLast, \rho q_m]$) and assume that it is the unique source of the flip graph~$\flipGraph[\sq{Q}_\eraseLast, \rho]$ (resp.~$\flipGraph[\sq{Q}_\eraseLast, \rho q_m]$). Consider a source~$X$ of~$\flipGraph$. We distinguish two cases:
\begin{itemize}
\item If~$\ell(\rho q_m) > \ell(\rho)$, then~$q_m$ cannot be the last reflection of a reduced expression for~$\rho$. Thus~$\subwordComplex = \subwordComplex[\sq{Q}_\eraseLast, \rho] \join m$ and~$X = X(\sq{Q}_\eraseLast, \rho) \cup m$.
\item Otherwise,~$\ell(\rho q_m) < \ell(\rho)$. If~$m$ is in~$X$, it is flippable (by Proposition~\ref{prop:roots&flips}, since~$\Root{X}{m} = \rho(\alpha_{q_m}) \in \Phi^-\cap\rho(\Phi^+) = -\inv(\rho^{-1})$) and its flip is decreasing. This would contradict the assumption that~$X$ is a source of~$\flipGraph$. Consequently,~$m \notin X$. Since the facets of~$\subwordComplex$ which do not contain~$m$ coincide with the facets of~$\flipGraph[\sq{Q}_\eraseLast, \rho q_m]$, we obtain that $X = X(\sq{Q}_\eraseLast,\rho q_m)$.
\end{itemize}
In both cases, we obtain that the source~$X$ is the lexicographically smallest facet of~$\subwordComplex$.
The proof is similar for the sink.
\end{proof}

We call \defn{positive} (resp.~\defn{negative}) \defn{greedy facet} and denote by~$\positiveGreedy$ (resp.~$\negativeGreedy$) the unique source (resp.~sink) of the graph~$\flipGraph$ of increasing flips. The term ``positive'' (resp.~``negative'') emphasizes the fact that~$\positiveGreedy$ (resp.~$\negativeGreedy$) is the unique facet of~$\subwordComplex$ whose root configuration is a subset of positive (resp.~negative) roots, while the term ``greedy'' refers to the greedy properties of these facets underlined in Lemmas~\ref{lem:greedy1} and~\ref{lem:greedy2}.

\begin{example}
The greedy facets of the subword complex~$\subwordComplex[\bar{\sq{Q}},\bar\rho]$ of Example~\ref{exm:toto} are~$\positiveGreedy[\bar{\sq{Q}},\bar\rho] = \{1,2,3,5,6\}$ and~$\negativeGreedy[\bar{\sq{Q}},\bar\rho] = \{3,4,7,8,9\}$. See \fref{fig:greedy}.

\begin{figure}[h]
	\centerline{\includegraphics[width=.9\textwidth]{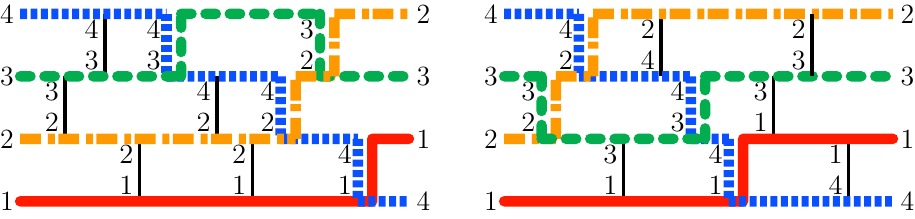}}
	\caption{The positive and negative greedy facets of~$\subwordComplex[\bar{\sq{Q}},\bar\rho]$.}
	\label{fig:greedy}
\end{figure}
\end{example}

The positive and negative greedy facets are clearly related by a reversing operation. More precisely, $\negativeGreedy[q_1 \cdots q_m,\rho] = \set{m+1-p}{p \in \positiveGreedy[q_m \cdots q_1,\rho^{-1}]}$. However, we will work in parallel with both positive and negative greedy facets, since certain results are simpler to understand and prove with~$\positiveGreedy$ while the others are simpler with~$\negativeGreedy$. In each proof, we only deal with the simplest situation and leave to the reader the translation to the opposite situation.

Remember that we denote by~$\sq{Q}_\eraseFirst \eqdef q_2 \cdots q_m$ and~$\sq{Q}_\eraseLast \eqdef q_1 \cdots q_{m-1}$ the words on~$S$ obtained from~$\sq{Q} \eqdef q_1 \cdots q_m$ by deleting its first and last letters respectively. We moreover denote by~$\shiftRight{X} \eqdef \set{x+1}{x \in X}$ and~$\shiftLeft{X} \eqdef \set{x-1}{x \in X}$ the right and left shifts of a subset~$X \subset \Z$. If~$\cX$ is a set of subsets of~$\Z$, we also write~$\shiftRight{\cX} \eqdef \set{\shiftRight{X}}{X \in \cX}$. Finally, remember that $\ell(\rho)$ denotes the length of~$\rho$, and that we write~${\rho \prec \sq{Q}}$ when~$\sq{Q}$ contains a reduced expression of~$\rho$.

The following two lemmas provide two (somehow inverse) greedy inductive procedures to construct the greedy facets~$\positiveGreedy$ and~$\negativeGreedy$. These lemmas are direct consequences of the definition of the greedy facets and the induction formulas~(\ref{eq:inductionRight}) and~(\ref{eq:inductionLeft}) on the subword complex.

\begin{lemma}
\label{lem:greedy1}
The greedy facets~$\positiveGreedy$ and~$\negativeGreedy$ can be constructed inductively from $\positiveGreedy[\varepsilon,e] = \negativeGreedy[\varepsilon,e] = \emptyset$ using the following formulas:
\begin{align*}
\positiveGreedy & = \begin{cases} \positiveGreedy[\sq{Q}_\eraseLast, \rho q_m] & \text{if } \ell(\rho q_m) < \ell(\rho), \\ \positiveGreedy[\sq{Q}_\eraseLast, \rho] \cup m & \text{otherwise.}\end{cases}
\\
\negativeGreedy & = \begin{cases} \shiftRight{\negativeGreedy[\sq{Q}_\eraseFirst, q_1\rho]} & \text{if } \ell(q_1\rho) < \ell(\rho), \\ 1 \cup \shiftRight{\negativeGreedy[\sq{Q}_\eraseFirst, \rho]} & \text{otherwise.}\end{cases}
\end{align*}
\end{lemma}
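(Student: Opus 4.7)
The plan is to parallel the induction implicit in the proof of Proposition~\ref{prop:increasingFlipGraph}, which identified $\positiveGreedy$ and $\negativeGreedy$ with the unique source and unique sink of the flip graph oriented by increasing flips. I would induct on the length of $\sq{Q}$ (the base case $\sq{Q}=\varepsilon$, $\rho=e$ is trivial, giving $\emptyset$) and prove the positive formula in full; the negative one then follows by the symmetric argument applied to the first letter of $\sq{Q}$, using~(\ref{eq:inductionLeft}) in place of~(\ref{eq:inductionRight}), with the right shift $\shiftRight{\cdot}$ appearing only because facets of $\subwordComplex[\sq{Q}_\eraseFirst,\cdot]$ are indexed in $[m-1]$.

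When $\ell(\rho q_m) > \ell(\rho)$, formula~(\ref{eq:inductionRight}) gives $\facets = \facets[\sq{Q}_\eraseLast,\rho] \join m$, so every facet contains $m$ and no flip can alter this. Then $I \mapsto I \ssm m$ is an orientation-preserving isomorphism of oriented flip graphs $\flipGraph \cong \flipGraph[\sq{Q}_\eraseLast,\rho]$ sending source to source, delivering $\positiveGreedy = \positiveGreedy[\sq{Q}_\eraseLast,\rho] \cup m$.

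The only substantive case is $\ell(\rho q_m) < \ell(\rho)$, where I need to show $m \notin \positiveGreedy$. I would argue by contradiction: if $m \in \positiveGreedy$, then $[m-1] \ssm \positiveGreedy = [m] \ssm \positiveGreedy$ is a reduced expression for $\rho$, whence $\Root{\positiveGreedy}{m} = \rho(\alpha_{q_m})$ lies in $\rho(\Phi^+) \cap \Phi^- = -\inv(\rho^{-1})$ (the inclusion in $\Phi^-$ being the standard translation of the length condition $\ell(\rho q_m) < \ell(\rho)$). By Proposition~\ref{prop:roots&flips}(\ref{prop:roots&flips:flippable}), $m$ is flippable, and by Proposition~\ref{prop:roots&flips}(\ref{prop:roots&flips:flip}) its flip partner $j$ satisfies $j < m$---a decreasing flip out of a source, impossible. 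Hence $m \notin \positiveGreedy$, and since the facets of $\subwordComplex$ avoiding $m$ coincide with those of $\subwordComplex[\sq{Q}_\eraseLast,\rho q_m]$ (in both remaining cases of~(\ref{eq:inductionRight})), the edges of $\flipGraph[\sq{Q}_\eraseLast,\rho q_m]$ incident to $\positiveGreedy$ form an orientation-preserving subset of the edges of $\flipGraph$ incident to $\positiveGreedy$. So $\positiveGreedy$ remains a source and, by induction, equals $\positiveGreedy[\sq{Q}_\eraseLast,\rho q_m]$.

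The symmetric obstruction for the negative formula reads: if $1 \in \negativeGreedy$ and $\ell(q_1\rho) < \ell(\rho)$, then $\Root{\negativeGreedy}{1} = \alpha_{q_1} \in \Phi^+ \cap \inv(\rho^{-1})$, so position~$1$ is flippable with partner $j > 1$, yielding an increasing flip out of a sink---again impossible. Beyond this root-theoretic step, everything is formal transport of the source/sink property along the inductive decompositions provided by~(\ref{eq:inductionRight}) and~(\ref{eq:inductionLeft}).
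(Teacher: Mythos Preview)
Your proposal is correct and follows essentially the same approach as the paper: the paper states that the lemma is a direct consequence of the definition of the greedy facets together with the induction formulas~(\ref{eq:inductionRight}) and~(\ref{eq:inductionLeft}), and the only nontrivial step---showing that the last (resp.\ first) position does not belong to~$\positiveGreedy$ (resp.\ $\negativeGreedy$) when the length decreases---is exactly the root-theoretic argument already spelled out in the proof of Proposition~\ref{prop:increasingFlipGraph}, which you reproduce here. One minor simplification: once you know $m \notin \positiveGreedy$, you can conclude directly from the lexicographic characterization (the lex-smallest facet overall is in particular lex-smallest among the facets avoiding~$m$), rather than transporting the source property through the subgraph inclusion.
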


\begin{lemma}
\label{lem:greedy2}
The greedy facets~$\positiveGreedy$ and~$\negativeGreedy$ can be constructed inductively from $\positiveGreedy[\varepsilon,e] = \negativeGreedy[\varepsilon,e] = \emptyset$ using the following formulas:
\begin{align*}
\positiveGreedy & = \begin{cases} 1 \cup \shiftRight{\positiveGreedy[\sq{Q}_\eraseFirst,\rho]} & \text{if } \rho \prec \sq{Q}_\eraseFirst, \\ \shiftRight{\positiveGreedy[\sq{Q}_\eraseFirst,q_1\rho]} & \text{otherwise.} \end{cases}
\\
\negativeGreedy & = \begin{cases} \negativeGreedy[\sq{Q}_\eraseLast,\rho] \cup m & \text{if } \rho \prec \sq{Q}_\eraseLast, \\ \negativeGreedy[\sq{Q}_\eraseLast,\rho q_m] & \text{otherwise.} \end{cases}
\end{align*}
\end{lemma}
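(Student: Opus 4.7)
The plan is to reduce both recursions to the lexicographic characterization of the greedy facets established in Proposition~\ref{prop:increasingFlipGraph}: $\positiveGreedy$ is the lex-smallest facet of $\subwordComplex$ and $\negativeGreedy$ is the lex-largest, where facets are compared as subsets of~$[m]$ in the usual lex order. Once this is combined with the inductive decomposition~(\ref{eq:inductionLeft}), which already sorts $\facets$ according to whether a facet contains the extremal position~$1$, only a short case analysis remains.

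The key observation for $\positiveGreedy$ is that all facets of $\subwordComplex$ have the same cardinality $m-\length(\rho)$, so any facet containing the smallest available position~$1$ is lex-smaller than every facet that does not contain~$1$. Hence $1 \in \positiveGreedy$ if and only if \emph{some} facet of $\subwordComplex$ contains~$1$, which is equivalent to $\rho \prec \sq{Q}_\eraseFirst$; this is exactly the dichotomy displayed in the lemma. In the case $\rho \prec \sq{Q}_\eraseFirst$, both applicable branches of~(\ref{eq:inductionLeft}) describe the facets of $\subwordComplex$ that contain~$1$ as $1 \join \shiftRight{\facets[\sq{Q}_\eraseFirst,\rho]}$, and the lex-smallest among them is $1 \cup \shiftRight{\positiveGreedy[\sq{Q}_\eraseFirst,\rho]}$. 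If instead $\rho \not\prec \sq{Q}_\eraseFirst$, formula~(\ref{eq:inductionLeft}) collapses to $\facets = \shiftRight{\facets[\sq{Q}_\eraseFirst,q_1\rho]}$, and since the right shift is order-preserving the lex-minimum is $\shiftRight{\positiveGreedy[\sq{Q}_\eraseFirst,q_1\rho]}$.

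The proof for $\negativeGreedy$ is entirely symmetric, with position~$1$ replaced by position~$m$, the lex-smallest by the lex-largest, and formula~(\ref{eq:inductionLeft}) by~(\ref{eq:inductionRight}): a facet containing~$m$ is always lex-larger than one that does not, so $m \in \negativeGreedy$ iff $\rho \prec \sq{Q}_\eraseLast$, and the two recursive formulas follow directly. The only subtlety is to notice that the two boundary-inclusive branches of~(\ref{eq:inductionRight}) and~(\ref{eq:inductionLeft}) (the length-increasing case and the genuine "otherwise" case) merge into the single condition $\rho \prec \sq{Q}_\eraseLast$ or $\rho \prec \sq{Q}_\eraseFirst$ at the greedy level, since the greedy facet is forced to pick up the extremal position whenever some facet does. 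With this observation, the argument is essentially mechanical and presents no real obstacle.
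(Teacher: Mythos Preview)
Your argument for~$\positiveGreedy$ is correct and is exactly the approach the paper has in mind: combine the lexicographic characterization of Proposition~\ref{prop:increasingFlipGraph} with the decomposition~(\ref{eq:inductionLeft}), and use that any facet containing position~$1$ is lex-smaller than any facet that does not (since~$1$ is then the minimum of their symmetric difference).

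The gap is in your ``entirely symmetric'' treatment of~$\negativeGreedy$. The claim \emph{``a facet containing~$m$ is always lex-larger than one that does not''} is false. In Example~\ref{exm:toto}, the facet~$\{1,2,3,7,9\}$ contains~$9$ yet is lex-smaller than~$\{3,4,5,6,8\}$, which does not. Lexicographic order compares the \emph{smallest} element of the symmetric difference, so the presence of the \emph{largest} position~$m$ says nothing directly. Your conclusion ``$m \in \negativeGreedy$ iff $\rho \prec \sq{Q}_\eraseLast$'' is true, but your stated reason does not establish it.

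There are two easy repairs. The first is the one the paper suggests just before Lemma~\ref{lem:greedy1}: use the reversing relation $\negativeGreedy[q_1\cdots q_m,\rho] = \set{m+1-p}{p \in \positiveGreedy[q_m\cdots q_1,\rho^{-1}]}$ to transport your (correct) formula for~$\positiveGreedy$ to~$\negativeGreedy$; the condition $\rho^{-1} \prec (q_m\cdots q_1)_\eraseFirst$ translates exactly to $\rho \prec \sq{Q}_\eraseLast$. The second repair keeps your argument but replaces lex by colex: since a flip $I\ssm i = J\ssm j$ with $i<j$ strictly increases the colex order (the maximum of~$I\symdif J$ is $j\in J$), the unique sink~$\negativeGreedy$ is also the colex-largest facet, and for colex the statement ``a facet containing~$m$ is colex-larger than one that does not'' \emph{is} correct. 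Either fix makes your proof go through with the same structure as the paper's.
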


Lemmas~\ref{lem:greedy1} and~\ref{lem:greedy2} can be reformulated to obtain greedy sweep procedures on the word~$\sq{Q}$ itself, avoiding the use of induction. Namely, the positive greedy facet is obtained: 
\begin{enumerate}
\item either sweeping~$\sq{Q}$ from right to left placing inversions as soon as possible,
\item or sweeping~$\sq{Q}$ from left to right placing non-inversions as long as possible.
\end{enumerate}
The negative greedy facet is obtained similarly, inversing the directions of the~sweeps.


\subsection{The greedy flip tree}
\label{subsec:greedyFlipTree}

We construct in this section the positive and negative greedy flip trees of~$\subwordComplex$. This construction mainly relies on the following \defn{greedy flip property} of greedy facets.

\begin{proposition}
\label{prop:gfp}
If~$m$ is a flippable element of~$\negativeGreedy$, then~$\negativeGreedy[\sq{Q}_\eraseLast,\rho q_m]$ is obtained from~$\negativeGreedy$ by flipping~$m$. If~$1$ is a flippable element of~$\positiveGreedy$, then $\positiveGreedy[\sq{Q}_\eraseFirst,q_1\rho]$ is obtained from~$\positiveGreedy$ by flipping~$1$ and shifting to the left.
\end{proposition}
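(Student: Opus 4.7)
Both halves of the statement admit the same strategy, and I focus on the $\negativeGreedy$ claim; the $\positiveGreedy$ case follows by the symmetry between positive and negative greedy facets noted just after Lemma~\ref{lem:greedy2}, applied to the reverse word $q_m\cdots q_1$.

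The plan is to flip $m$ in $\negativeGreedy$ to produce a candidate facet $J$, and then verify that $J$ satisfies the sink characterization in the smaller subword complex $\subwordComplex[\sq{Q}_\eraseLast,\rho q_m]$. Suppose $m$ is flippable in $\negativeGreedy$. Then $m\in\negativeGreedy$, and Lemma~\ref{lem:greedy2} forces $\rho\prec\sq{Q}_\eraseLast$ and $\negativeGreedy = \negativeGreedy[\sq{Q}_\eraseLast,\rho]\cup\{m\}$; in particular $\sigma_{[m-1]\ssm\negativeGreedy} = \rho$ and $\Root{\negativeGreedy}{m} = \rho(\alpha_{q_m})$. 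Since $\negativeGreedy$ is the sink of the increasing flip graph (Proposition~\ref{prop:increasingFlipGraph}), the flip of $m$ is decreasing, so part (\ref{prop:roots&flips:flip}) of Proposition~\ref{prop:roots&flips} yields $\Root{\negativeGreedy}{m}\in\Phi^-$ and hence $\ell(\rho q_m)<\ell(\rho)$. Let $j<m$ be the flip partner, with $\Root{\negativeGreedy}{j} = -\rho(\alpha_{q_m})$, and set $J \eqdef \negativeGreedy\ssm\{m\}\cup\{j\}$. Because $m\notin J$, truncating the trailing $q_m$ in the reduced expression $[m]\ssm J$ for $\rho$ gives a reduced expression for $\rho q_m$ inside $\sq{Q}_\eraseLast$, so $J\in\facets[\sq{Q}_\eraseLast,\rho q_m]$.

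To conclude $J = \negativeGreedy[\sq{Q}_\eraseLast,\rho q_m]$, I would use the following consequence of parts (\ref{prop:roots&flips:flip}) and (\ref{prop:roots&flips:flippable}) of Proposition~\ref{prop:roots&flips}: a facet $I$ is the unique sink of its increasing flip graph iff $\Root{I}{i}\notin\inv(w^{-1})$ for every $i\in I$, where $w$ is the associated Coxeter element. A direct computation using the fact that $q_m$ only exchanges $\pm\alpha_{q_m}$ yields
\[
\inv((\rho q_m)^{-1}) \;=\; \inv(\rho^{-1})\,\ssm\,\{-\rho(\alpha_{q_m})\}.
\]
From part (\ref{prop:roots&flips:update}) of Proposition~\ref{prop:roots&flips}, the root function of $J$ satisfies $\Root{J}{k} = \Root{\negativeGreedy}{k}$ for $k\le j$ and $\Root{J}{k} = s_{\rho(\alpha_{q_m})}(\Root{\negativeGreedy}{k})$ for $j<k\le m-1$. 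Two cases are immediate: $\Root{J}{j} = -\rho(\alpha_{q_m})$ is precisely the excluded root, and for $k\in\negativeGreedy$ with $k<j$, $\Root{J}{k} = \Root{\negativeGreedy}{k}\notin\inv(\rho^{-1})$ by the sink property of $\negativeGreedy$, and \textit{a fortiori} not in $\inv((\rho q_m)^{-1})$.

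The delicate case, which I expect to be the main technical obstacle, is $k\in\negativeGreedy$ with $j<k<m$. Writing $s_{\rho(\alpha_{q_m})} = \rho q_m\rho^{-1}$ and simplifying, one obtains $(\rho q_m)^{-1}(\Root{J}{k}) = \rho^{-1}(\Root{\negativeGreedy}{k})$, so $\Root{J}{k}\in\inv((\rho q_m)^{-1})$ would require $\Root{\negativeGreedy}{k}\in\rho(\Phi^-)$. Combined with $\Root{\negativeGreedy}{k}\notin\inv(\rho^{-1}) = \Phi^+\cap\rho(\Phi^-)$, this forces $\Root{\negativeGreedy}{k}\in\Phi^-\cap\rho(\Phi^-)$; but part (\ref{prop:roots&flips:flippable}) of Proposition~\ref{prop:roots&flips} combined with the sink property of $\negativeGreedy$ confines any negative value of $\Root{\negativeGreedy}{k}$ (for $k\in\negativeGreedy$) to $-\inv(\rho^{-1}) = \Phi^-\cap\rho(\Phi^+)$, which is disjoint from $\rho(\Phi^-)$. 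This contradiction closes the verification and proves $J = \negativeGreedy[\sq{Q}_\eraseLast,\rho q_m]$; the crux of the argument is precisely this bookkeeping of how $s_{\rho(\alpha_{q_m})}$ transports inversion sets along the flip.
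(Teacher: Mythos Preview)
Your proof is correct and follows the same overall strategy as the paper: perform the flip, compute the resulting root function via Proposition~\ref{prop:roots&flips}(\ref{prop:roots&flips:update}), and verify that the new facet satisfies the root-theoretic characterization of the greedy facet. The one noteworthy difference is that you argue on the negative side, whereas the paper deliberately argues on the positive side. As the paper remarks, the positive case is simpler because $\Root{\positiveGreedy}{1}=\alpha_{q_1}$ is a \emph{simple} root; hence the reflection entering the update formula is $q_1$ itself, and the only positive root it can negate is $\alpha_{q_1}$. This reduces the ``delicate case'' to the single observation that no position $i\in\positiveGreedy$ with $\Root{\positiveGreedy}{i}=\alpha_{q_1}$ lies beyond $j$ (else flipping it would be decreasing), so $q_1$ cannot produce a negative root. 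Your treatment of the analogous case on the negative side is forced to track how the non-simple reflection $s_{\rho(\alpha_{q_m})}=\rho q_m\rho^{-1}$ interacts with $\inv((\rho q_m)^{-1})$, which is exactly the extra bookkeeping the paper's choice avoids. Both arguments are valid; the paper's is shorter, yours makes the inversion-set mechanics more explicit.
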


\begin{proof}
Although the formulation is simpler for the negative greedy facets, the proof is simpler for the positive ones (due to the direction chosen in the definition of the root function). Assume that~$1$ is a flippable element of~$\positiveGreedy$. Let~$J \in \facets$ and~$j \in J$ be such that $\positiveGreedy \ssm 1 = J \ssm j$. Consider the facet~$\shiftLeft{J}$ of~$\subwordComplex[\sq{Q}_\eraseFirst,q_1\rho]$ obtained shifting~$J$ to the left. Proposition~\ref{prop:roots&flips} (\ref{prop:roots&flips:update}) enables us to compute the root function~$\Root{J}{\cdot}$ for~$J$, which in turn gives us the root function for~$\shiftLeft{J}$:
$$\Root{\shiftLeft{J}}{k} = \begin{cases} \Root{\positiveGreedy}{k+1} & \text{if } 1 \le k \le j-1, \\ q_1(\Root{\positiveGreedy}{k+1}) & \text{otherwise.} \end{cases}$$
Since all positions~$i \in \positiveGreedy$ such that~$\Root{\positiveGreedy}{i} = \alpha_{q_1}$ are located before~$j$, and since $\alpha_{q_1}$ is the only positive root sent to a negative root by the simple reflection~$q_1$, all roots~$\Root{\shiftLeft{J}}{k}$, for~$k \in \shiftLeft{J}$, are positive. Consequently,~$\shiftLeft{J} = \positiveGreedy[\sq{Q}_\eraseFirst,q_1\rho]$.
\end{proof}

\begin{example}
Consider the subword complex of Example~\ref{exm:toto}. Since~$9$ is flippable in~$\negativeGreedy[\bar{\sq{Q}},\bar\rho] = \{3,4,7,8,9\}$, we have~$\negativeGreedy[\bar{\sq{Q}}_\eraseLast,\bar\rho\tau_1] = \{3,4,6,7,8\}$. Since~$1$~is flippable in~$\positiveGreedy[\bar{\sq{Q}},\bar\rho] = \{1,2,3,5,6\}$, we have~$\positiveGreedy[\bar{\sq{Q}}_\eraseFirst,\tau_2\bar\rho] = \shiftLeft{\{2,3,5,6,8\}} = \{1,2,4,5,7\}$.
\end{example}

We now define inductively the \defn{negative greedy flip tree}~$\negativeGreedyTree$. The induction follows the right induction formula~(\ref{eq:inductionRight}) for the facets~$\facets$. For the empty word~$\varepsilon$ and the identity~$e$ of~$W$, the tree~$\negativeGreedyTree[\varepsilon,e]$ is formed by the unique facet~$\emptyset$ of~$\subwordComplex[\varepsilon,e]$. For a non-empty word~$\sq{Q}$, we define the tree~$\negativeGreedyTree$ as
\begin{enumerate}[(i)]
\item $\negativeGreedyTree[\sq{Q}_\eraseLast, \rho q_m]$ if~$m$ appears in none of the facets of~$\subwordComplex$;
\item $\negativeGreedyTree[\sq{Q}_\eraseLast, \rho] \join m$ if~$m$ appears in all the facets of~$\subwordComplex$;
\item the disjoint union of~$\negativeGreedyTree[\sq{Q}_\eraseLast, \rho q_m]$ and~$\negativeGreedyTree[\sq{Q}_\eraseLast, \rho] \join m$, with an additional arc from~$\negativeGreedy[\sq{Q}_\eraseLast, \rho q_m]$ to~$\negativeGreedy = \negativeGreedy[\sq{Q}_\eraseLast, \rho] \cup m$, otherwise.
\end{enumerate}
See \fref{fig:negativeGreedyTree} for the negative greedy flip tree~$\negativeGreedyTree[\bar{\sq{Q}},\bar \rho]$ of Example~\ref{exm:toto}.

\begin{figure}[h]
	\centerline{
	\tikzstyle{every node}=[draw]
	\begin{tikzpicture}
		[
			level distance = 1cm,
			level 1/.style={sibling distance=3.5cm},
		    level 2/.style={sibling distance=2.1cm},
		    level 3/.style={sibling distance=1.4cm},
		]
		\node {$34789|$}
			child { node {$34678|$} 
				child { node {$13467|$}
					child { node {$13456|$}
						child { node {$123|56$} }
					}
					child { node {$123|67$} }
				}
				child { node {$3456|8$}
					child { node {$23|568$} }
				}
				child { node {$23|678$} }
			}
			child { node {$1347|9$} 
				child { node {$123|79$} }
			}
			child { node {$23|789$} }
		;
	\end{tikzpicture}}
	\caption{The negative greedy flip tree~$\negativeGreedyTree[\bar{\sq{Q}},\bar\rho]$ on the subword complex of Example~\ref{exm:toto}. Each facet~$I$ is denoted as the concatenation of its elements. The symbol~$|$ is explained in Example~\ref{exm:greedyIndex}.}
	\label{fig:negativeGreedyTree}
\end{figure}

We define similarly the \defn{positive greedy flip tree}~$\positiveGreedyTree$ of~$\subwordComplex$. The induction now follows the left induction formula~(\ref{eq:inductionLeft}) for the facets~$\facets$. The tree~$\positiveGreedyTree[\varepsilon,e]$ is formed by the unique facet~$\emptyset$ of~$\subwordComplex[\varepsilon,e]$. For a non-empty word~$\sq{Q}$, we define the tree~$\positiveGreedyTree$ as
\begin{enumerate}[(i)]
\item $\shiftRight{\positiveGreedyTree[\sq{Q}_\eraseFirst, q_1 \rho]}$ if~$1$ appears in none of the facets of~$\subwordComplex$;
\item $1 \join \shiftRight{\positiveGreedyTree[\sq{Q}_\eraseFirst, \rho]}$ if~$1$ appears in all the facets of~$\subwordComplex$;
\item the disjoint union of~$\shiftRight{\positiveGreedyTree[\sq{Q}_\eraseFirst, q_1 \rho]}$ and~$1 \join \shiftRight{\positiveGreedyTree[\sq{Q}_\eraseFirst, \rho]}$, with an additional arc from~$\positiveGreedy = 1 \cup \shiftRight{\positiveGreedy[\sq{Q}_\eraseFirst, \rho]}$ to~$\shiftRight{\positiveGreedy[\sq{Q}_\eraseFirst, q_1\rho]}$, otherwise.
\end{enumerate}
See \fref{fig:positiveGreedyTree} for the positive greedy flip tree~$\positiveGreedyTree[\bar{\sq{Q}},\bar \rho]$ of Example~\ref{exm:toto}.

\begin{figure}
	\centerline{
	\tikzstyle{every node}=[draw]
	\begin{tikzpicture}
		[
			grow'=up,
			level distance = 1cm,
			level 1/.style={sibling distance=2.3cm},
		    level 2/.style={sibling distance=1.5cm},
		]
		\node {$|12356$}
			child { node {$|23568$}
				child { node {$|34568$}
					child { node {$34|678$}
						child { node {$34|789$} }
					}
				}
				child { node {$23|678$}
					child { node {$23|789$} }
				}
			}
			child { node {$1|3456$}
				child { node {$134|67$}
					child { node {$134|79$} }
				}
			}
			child { node {$123|67$}
				child { node {$123|79$} }
			}
		;
	\end{tikzpicture}}
	\caption{The positive greedy flip tree~$\positiveGreedyTree[\bar{\sq{Q}},\bar\rho]$ on the subword complex of Example~\ref{exm:toto}. Each facet~$I$ is denoted as the concatenation of its elements. The symbol~$|$ is explained in Example~\ref{exm:greedyIndex}.}
	\label{fig:positiveGreedyTree}
\end{figure}

\begin{lemma}
\label{lem:greedyTree}
The negative (resp.~positive) greedy flip tree is a spanning trees of the increasing flip graph~$\flipGraph$, oriented towards its root~$\negativeGreedy$ (resp.~from its root~$\positiveGreedy$).
\end{lemma}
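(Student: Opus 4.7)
The plan is to prove this by induction on the length $m$ of $\sq{Q}$, following the three-case inductive definition of $\negativeGreedyTree$ in parallel with the decomposition~(\ref{eq:inductionRight}) of $\facets$; the positive case would be entirely symmetric, so I would only write out the negative one. The base case $m=0$ is trivial, since $\negativeGreedyTree[\varepsilon,e]$ is a single vertex.

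For the inductive step, cases~(i) and~(ii) are pure bookkeeping. In case~(i), $\facets$ coincides with $\facets[\sq{Q}_\eraseLast,\rho q_m]$, their flip graphs are identical, and Lemma~\ref{lem:greedy2} gives $\negativeGreedy = \negativeGreedy[\sq{Q}_\eraseLast,\rho q_m]$. In case~(ii), the operation~$\cdot \join m$ is a bijection between facets of $\subwordComplex[\sq{Q}_\eraseLast,\rho]$ and those of $\subwordComplex$ which preserves adjacency and the increasing/decreasing nature of each flip (the common position $m$ plays no role in the comparison $i<j$); Lemma~\ref{lem:greedy1} then yields $\negativeGreedy = \negativeGreedy[\sq{Q}_\eraseLast,\rho] \cup m$, so the inductive tree transports along with its orientation.

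The substance is case~(iii). By the inductive hypothesis, $\negativeGreedyTree[\sq{Q}_\eraseLast,\rho q_m]$ and $\negativeGreedyTree[\sq{Q}_\eraseLast,\rho] \join m$ are spanning trees of the subgraphs of $\flipGraph$ induced on the facets not containing $m$ and on those containing $m$, respectively, each oriented towards its own root. By~(\ref{eq:inductionRight}) these two vertex sets partition $\facets$, so the disjoint union has $|\facets|-2$ edges; adding one more arc gives a graph on $|\facets|$ vertices with $|\facets|-1$ edges, which is a spanning tree as soon as the added arc is actually an edge of $\flipGraph$. To match the claimed orientation it must moreover be an increasing flip pointing to $\negativeGreedy$.

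The heart of the argument is therefore to verify these properties of the additional arc from $\negativeGreedy[\sq{Q}_\eraseLast,\rho q_m]$ to $\negativeGreedy$, and this is the step I expect to be the main obstacle. By Lemma~\ref{lem:greedy2}, case~(iii) gives $\negativeGreedy = \negativeGreedy[\sq{Q}_\eraseLast,\rho] \cup m$, so $[m-1] \ssm \negativeGreedy$ is the complement of a reduced expression for $\rho$ in $\sq{Q}_\eraseLast$; hence $\sigma_{[m-1] \ssm \negativeGreedy} = \rho$ and
$$\Root{\negativeGreedy}{m} = \rho(\alpha_{q_m}).$$
The case~(iii) hypothesis $\ell(\rho q_m) < \ell(\rho)$ forces $\rho(\alpha_{q_m}) \in \Phi^-$, and together with $\alpha_{q_m} \in \Phi^+$ this places $\Root{\negativeGreedy}{m}$ in $-\inv(\rho^{-1})$. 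Proposition~\ref{prop:roots&flips}(\ref{prop:roots&flips:flippable}) then certifies that $m$ is flippable in $\negativeGreedy$, after which the greedy flip property of Proposition~\ref{prop:gfp} identifies the result of this flip with $\negativeGreedy[\sq{Q}_\eraseLast,\rho q_m]$, establishing adjacency. Finally, since $\Root{\negativeGreedy}{m} \in \Phi^-$, Proposition~\ref{prop:roots&flips}(\ref{prop:roots&flips:flip}) places the flipped position $j<m$, so the arc $\negativeGreedy[\sq{Q}_\eraseLast,\rho q_m] \to \negativeGreedy$ is indeed an increasing flip pointing to the global root, as desired.
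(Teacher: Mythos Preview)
Your proof is correct and follows the same inductive approach as the paper, just with considerably more detail filled in; the paper's proof compresses your entire case~(iii) analysis into a single appeal to Proposition~\ref{prop:gfp}, leaving the verification that $m$ is flippable (and that the resulting flip is increasing) implicit. One minor slip: in case~(ii) you cite Lemma~\ref{lem:greedy1} for the identity $\negativeGreedy = \negativeGreedy[\sq{Q}_\eraseLast,\rho] \cup m$, but that lemma concerns deletion of the \emph{first} letter for the negative greedy facet; the correct reference is Lemma~\ref{lem:greedy2} (case $\rho \prec \sq{Q}_\eraseLast$), which you invoke correctly in cases~(i) and~(iii).
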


\begin{proof}
We prove the result for~$\negativeGreedyTree$ by induction on the length of~$\sq{Q}$. On the one hand, both the increasing flip graphs~$\flipGraph[\sq{Q}_\eraseLast, \rho q_m]$ and~$\flipGraph[\sq{Q}_\eraseLast, \rho] \join m$ are subgraphs of the increasing flip graph~$\flipGraph$. On the other hand, in the case where~$m$ appears in some but not all facets of~$\subwordComplex$, the additional arc from~$\negativeGreedy[\sq{Q}_\eraseLast, \rho q_m]$ to~$\negativeGreedy$ is an increasing flip according to Proposition~\ref{prop:gfp}.
\end{proof}

\begin{example}
Consider the subword complex~$\subwordComplex[\bar{\sq{Q}},\bar\rho]$ of Example~\ref{exm:toto}. Figures~\ref{fig:negativeGreedyTree} and \ref{fig:positiveGreedyTree} represent respectively the negative and the positive greedy flip trees~$\negativeGreedyTree[\bar{\sq{Q}},\bar\rho]$ and~$\positiveGreedyTree[\bar{\sq{Q}},\bar\rho]$. These trees are also represented on \fref{fig:flipGraph2} as spanning trees of the flip graph~$\flipGraph[\bar{\sq{Q}},\bar\rho]$ of \fref{fig:flipGraph}.
\begin{figure}[h]
	\centerline{\includegraphics[width=\textwidth]{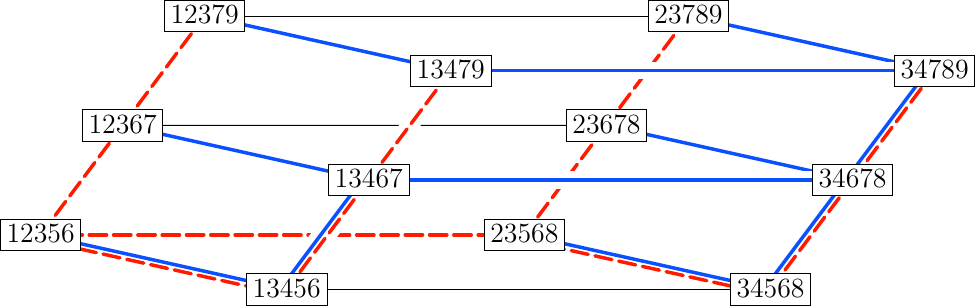}}
	\caption{The negative greedy flip tree~$\negativeGreedyTree[\bar{\sq{Q}},\bar\rho]$ (strong blue edges) and the positive greedy flip tree~$\positiveGreedyTree[\bar{\sq{Q}}, \bar\rho]$ (dashed red edges) are oriented spanning trees of the increasing flip graph~$\flipGraph[\bar{\sq{Q}},\bar\rho]$.}
	\label{fig:flipGraph2}
\end{figure}
\end{example}

The goal of the end of this section is to give a direct description of the greedy flip trees~$\negativeGreedyTree$ and~$\positiveGreedyTree$, avoiding the use of induction. Let~$I$ be a facet of the subword complex~$\subwordComplex$. We define the \defn{negative greedy index}~$\negativeGreedyIndex(I)$ of the facet~$I$ to be the last position~$x \in [m]$ such that $I \cap [x] = \negativeGreedy[q_1 \cdots q_x, \sigma_{[x] \ssm I}]$. In other words, the facet~$I$ is greedy until~$\negativeGreedyIndex(I)$ and not afterwards. Note in particular that~$I \cap [x]$ is greedy if and only if~$x \le \negativeGreedyIndex(I)$. Similarly, we define the \defn{positive greedy index}~$\positiveGreedyIndex(I)$ of the facet~$I$ to be the smallest position ${x \in [m]}$ such that ${\set{i-x}{i \in I \ssm [x]} = \positiveGreedy[q_{x+1} \cdots q_m, \sigma_{[x+1,m] \ssm I}]}$.

\begin{example}
\label{exm:greedyIndex}
Consider the subword complex~$\subwordComplex[\bar{\sq{Q}},\bar\rho]$ of Example~\ref{exm:toto}. The pseudoline arrangements associated to the facets~$\bar I \eqdef \{1,3,4,7,9\}$ and~$\bar J \eqdef \{3,4,7,8,9\}$ are represented in \fref{fig:flip}. We have~$\negativeGreedyIndex(\bar I) = 7$, while~$\negativeGreedyIndex(\bar J) = 9$ (\ie $\bar J$ is the negative greedy facet).

In \fref{fig:negativeGreedyTree}, the symbol~$|$ separates the elements smaller or equal to~$\negativeGreedyIndex(I)$ from those which are strictly larger than~$\negativeGreedyIndex(I)$. Similarly, in \fref{fig:positiveGreedyTree}, the symbol~$|$ separates the elements strictly smaller than~$\positiveGreedyIndex(I)$ from those which are larger or equal to~$\positiveGreedyIndex(I)$.
\end{example}

The following lemma provides the rule to update the greedy indices when we perform certain specific flips.

\begin{lemma}
\label{lem:greedyIndex}
Let~$I$ and~$J$ be two adjacent facets of~$\subwordComplex$ with~$I \ssm i = J \ssm j$. If $i < j \le \negativeGreedyIndex(J)$, then $\negativeGreedyIndex(I) = j-1$. If $\positiveGreedyIndex(I) \le i < j$, then $\positiveGreedyIndex(J) = i+1$.
\end{lemma}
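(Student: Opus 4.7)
The plan is to handle the negative-greedy statement; the positive case follows by the symmetric argument based on Lemma~\ref{lem:greedy2} and the second half of Proposition~\ref{prop:gfp}.

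The key observation is that $I\cap[j]$ and $J\cap[j]$ lie in the \emph{same} smaller subword complex $\mathcal{C} \eqdef \subwordComplex[q_1\cdots q_j, \sigma_{[j]\ssm J}]$ and are flip-related there. To see $\sigma_{[j]\ssm I} = \sigma_{[j]\ssm J}$, I would use that $I$ and $J$ agree outside $\{i,j\}\subseteq[j]$, so $[j+1,m]\ssm I = [j+1,m]\ssm J$ and hence $\sigma_{[j+1,m]\ssm I} = \sigma_{[j+1,m]\ssm J}$; right-cancelling in the identity $\rho = \sigma_{[m]\ssm I} = \sigma_{[m]\ssm J}$ yields the claim. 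The flip relation $(I\cap[j])\ssm i = (J\cap[j])\ssm j$ is immediate from $I\ssm i = J\ssm j$.

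Next, the hypothesis $j \le \negativeGreedyIndex(J)$ asserts $J\cap[j] = \negativeGreedy[\mathcal{C}]$, and $j$ is flippable in this facet since $I\cap[j]$ witnesses the flip. Applying Proposition~\ref{prop:gfp} to $\mathcal{C}$ identifies the facet obtained by flipping $j$ in $\negativeGreedy[\mathcal{C}]$ as $\negativeGreedy[q_1\cdots q_{j-1}, \sigma_{[j]\ssm J}\cdot q_j]$. On one hand, this flip produces $I\cap[j]$. On the other hand, using $\sigma_{[j]\ssm I} = \sigma_{[j]\ssm J}$ together with $\sigma_{[j]\ssm I} = \sigma_{[j-1]\ssm I}\cdot q_j$ (since $j \in [j]\ssm I$), we get $\sigma_{[j]\ssm J}\cdot q_j = \sigma_{[j-1]\ssm I}$; and $I\cap[j] = I\cap[j-1]$ since $j\notin I$. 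Combining these,
$$I \cap [j-1] = \negativeGreedy[q_1\cdots q_{j-1}, \sigma_{[j-1]\ssm I}],$$
so $\negativeGreedyIndex(I) \ge j-1$. For the opposite inequality, $I\cap[j]$ is obtained from $\negativeGreedy[\mathcal{C}]$ by a nontrivial flip, hence $I\cap[j] \ne \negativeGreedy[q_1\cdots q_j, \sigma_{[j]\ssm I}]$, which together with the observation recorded just before the lemma that $I\cap[x]$ is greedy iff $x \le \negativeGreedyIndex(I)$ yields $\negativeGreedyIndex(I) < j$. Altogether, $\negativeGreedyIndex(I) = j-1$.

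The main obstacle is recognizing that the flip in $\subwordComplex$ restricts to a flip in $\mathcal{C}$; once the identification $\sigma_{[j]\ssm I} = \sigma_{[j]\ssm J}$ is secured, the greedy flip property of Proposition~\ref{prop:gfp} takes over and identifies $I\cap[j-1]$ directly with the negative greedy of the smaller complex.
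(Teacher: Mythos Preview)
Your proof is correct and follows essentially the same approach as the paper's: both establish $\negativeGreedyIndex(I) \ge j-1$ by applying the greedy flip property (Proposition~\ref{prop:gfp}) to the restricted complex $\subwordComplex[q_1\cdots q_j,\sigma_{[j]\ssm J}]$, and both obtain $\negativeGreedyIndex(I) < j$ by noting that $I\cap[j]$ differs from the negative greedy facet of that complex. Your version is more explicit in justifying the equality $\sigma_{[j]\ssm I} = \sigma_{[j]\ssm J}$ and in verifying that the flip in $\subwordComplex$ restricts to a flip in the smaller complex, points the paper leaves implicit.
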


\begin{proof}
We prove the result for the negative greedy index. On the one hand, we have $j \in J \cap [j] = \negativeGreedy[q_1 \cdots q_j,\sigma_{[j] \ssm J}] = \negativeGreedy[q_1 \cdots q_j,\sigma_{[j] \ssm I}]$. Since $j \notin I \cap [j]$, this implies that~$\negativeGreedyIndex(I) < j$.
On the other hand, the negative greedy flip property of Proposition~\ref{prop:gfp} ensures that~$I \cap [j-1] = \negativeGreedy[q_1 \cdots q_{j-1}, \sigma_{[j-1] \ssm I}]$ since it is obtained from~$J \cap [j] = \negativeGreedy[q_1 \cdots q_j,\sigma_{[j] \ssm J}]$ by flipping~$j$. Thus, $\negativeGreedyIndex(I) \ge j-1$.
\end{proof}

\begin{proposition}
\label{prop:greedyTree}
The negative greedy flip tree~$\negativeGreedyTree$ (resp. positive greedy flip tree~$\positiveGreedyTree$) has one vertex for each facet of~$\facets$, and one arc from a facet~$I$ to a facet~$J$ if and only if~$I \ssm i = J \ssm j$ for some~$i \in I$ and~$j \in J$ satisfying~$i < j \le \negativeGreedyIndex(J)$ (resp.~$\positiveGreedyIndex(I) \le i < j$).
\end{proposition}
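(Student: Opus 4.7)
The plan is to prove the proposition by induction on the length~$m$ of~$\sq{Q}$, following the three cases in the inductive definition of~$\negativeGreedyTree$. The positive case is symmetric. The base case $m=0$, $\rho=e$ is immediate, since both the tree and the characterization graph reduce to the single vertex~$\emptyset$ with no arcs.

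The preliminary fact to establish is a stability statement for the greedy index: whenever a facet~$I$ of~$\subwordComplex$ is also a facet of some smaller complex $\subwordComplex[\sq{Q}_\eraseLast, \rho']$, possibly after the $\join m$ operation, the two values of~$\negativeGreedyIndex(I)$ agree as long as neither reaches~$m$, and $\negativeGreedyIndex(I) = m$ in~$\subwordComplex$ holds exactly when~$I = \negativeGreedy$. This is immediate from the definition, since the greedy check $I \cap [x] = \negativeGreedy[q_1 \cdots q_x, \sigma_{[x] \ssm I}]$ at position~$x$ depends only on~$I \cap [x]$ and on the prefix~$q_1 \cdots q_x$, while the case $x = m$ reduces by reduced expression counting to $I = \negativeGreedy$. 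With this in hand, cases~(i) and~(ii) of the tree definition reduce immediately to the induction hypothesis: the set of facets coincides with (or is a trivial shift of) that of a smaller complex, and the arc-characterization is transferred intact.

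The substantive case is~(iii). Here I would partition the arcs of the characterization graph according to whether the position~$m$ lies in both, neither, or exactly one of the endpoints. Arcs with~$m$ in both endpoints, respectively in neither, correspond bijectively to the arcs of the subtrees $\negativeGreedyTree[\sq{Q}_\eraseLast, \rho] \join m$ and $\negativeGreedyTree[\sq{Q}_\eraseLast, \rho q_m]$ by the induction hypothesis together with the stability statement. For an arc $I \to J$ of the characterization graph with~$m$ in exactly one endpoint, the condition $i<j$ forces $m \in J \ssm I$ and $j = m$; combined with $j \le \negativeGreedyIndex(J) \le m$, this gives $\negativeGreedyIndex(J) = m$ and hence $J = \negativeGreedy$. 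Proposition~\ref{prop:gfp} then identifies~$I$ uniquely as the negative greedy facet $\negativeGreedy[\sq{Q}_\eraseLast, \rho q_m]$, which is precisely the extra arc prescribed in~$\negativeGreedyTree$.

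The main obstacle is the careful bookkeeping of greedy indices across the inductive decomposition: the definition of $\negativeGreedyIndex(I)$ involves the auxiliary greedy facets $\negativeGreedy[q_1 \cdots q_x, \sigma_{[x] \ssm I}]$, and one must verify that these behave transparently when passing between~$\subwordComplex$ and a smaller subword complex. Once this stability is in place, the proposition follows cleanly from the inductive construction of~$\negativeGreedyTree$ and from the greedy flip property of Proposition~\ref{prop:gfp}.
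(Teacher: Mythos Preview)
Your proposal is correct and follows essentially the same route as the paper: induction on the length of~$\sq{Q}$, the stability of the greedy index under passage to~$\sq{Q}_\eraseLast$ (which the paper writes explicitly as $\negativeGreedyIndex_{\sq{Q}_\eraseLast,\rho'}(J) = \min(\negativeGreedyIndex_{\sq{Q},\rho}(J), m-1)$), and the greedy flip property (Proposition~\ref{prop:gfp}) to identify the unique bridging arc. The only organizational difference is that you case first on the three branches of the tree's inductive definition and then, in branch~(iii), on the membership of~$m$ in~$I$ and~$J$, whereas the paper cases directly on the membership of~$m$; the content is the same.
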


\begin{proof}
We prove the result for the negative greedy flip tree by induction on the length of~$\sq{Q}$. We write here~$\negativeGreedyIndex_{\sq{Q},\rho}(I)$ to specify that we consider the negative greedy index of a set~$I$ regarded as a facet of~$\subwordComplex$. The result holds on the subword complex~$\subwordComplex[\varepsilon, e]$. Consider now two facets~$I$ and~$J$ of~$\subwordComplex$ with~$I \ssm i = J \ssm j$ for some $i < j$. We have three cases:
\begin{enumerate}[(i)]
\item If~$m$ is neither in~$I$ nor in~$J$, then~$I$ and~$J$ are both facets of~$\subwordComplex[\sq{Q}_\eraseLast,\rho q_m]$ and $\negativeGreedyIndex_{\sq{Q}_\eraseLast,\rho q_m}(J) = \min(\negativeGreedyIndex_{\sq{Q},\rho}(J), m-1)$. We thus conclude by induction.
\item If~$m$ is both in~$I$ and in~$J$, then~$I \ssm m$ and~$J \ssm m$ are both facets of~$\subwordComplex[\sq{Q}_\eraseLast,\rho]$ and $\negativeGreedyIndex_{\sq{Q}_\eraseLast,\rho}(J \ssm m) = \min(\negativeGreedyIndex_{\sq{Q},\rho}(J), m-1)$. We thus conclude by induction.
\item Otherwise, $m$ is in precisely one of the facets~$I$ and~$J$. Thus, we must have~${j = m}$. If~$j \le \negativeGreedyIndex(J)$, then $J = \negativeGreedy$, $I = \negativeGreedy[\sq{Q}_\eraseLast,\rho q_m]$, and the flip from~$I$ to~$J$ is an arc of~$\negativeGreedyTree$. Conversely, if~$j > \negativeGreedyIndex(J)$, then $J \ne \negativeGreedy$ and the flip from~$I$ to~$J$ is not an arc of~$\negativeGreedyTree$. \qedhere
\end{enumerate}
\end{proof}

Although we defined the greedy flip trees~$\negativeGreedyTree$ and~$\positiveGreedyTree$ inductively, the results of Lemma~\ref{lem:greedyIndex} and Proposition~\ref{prop:greedyTree} enable us to construct them directly on the graph~$\flipGraph$, avoiding the use of induction. We use this construction to provide a non-inductive enumeration scheme for the facets of~$\subwordComplex$.


\subsection{The greedy flip algorithm}
\label{subsec:greedyFlipTree}

The \defn{greedy flip algorithm} generates all facets of the subword complex~$\subwordComplex$ by a depth first search procedure on the (positive or negative) greedy flip tree. The preorder traversal of the greedy flip tree also provides an iterator on the facets of~$\subwordComplex$. Given a facet~$I$ of~$\subwordComplex$, we can indeed compute its next element in the preorder traversal, provided we know its root function, its greedy index and the path from~$I$ to the root in the greedy flip tree. These data can be updated at each step of the algorithm, using Proposition~\ref{prop:roots&flips} for the root function and Lemma~\ref{lem:greedyIndex} for the greedy index.

To evaluate the running time and working space of the greedy flip algorithm, remember that we consider as parameters both the rank~$n$ of the group~$W$ and the size~$m$ of the word~$\sq{Q}$. During the algorithm, we only need to remember the current facet, together with its root function, its greedy index, and its path to the root in the greedy flip tree. Thus, the working space of the algorithm is in~$O(mn)$. Concerning running time, each facet needs at most~$m$ flips to generate all its children in the greedy flip tree. Since a flip can be performed in~$O(mn)$ time (see Section~\ref{subsec:roots&flips}), the running time per facet of the greedy flip algorithm is in~$O(m^2n)$.

\begin{figure}[b]
	\centerline{\includegraphics[width=.5\textwidth]{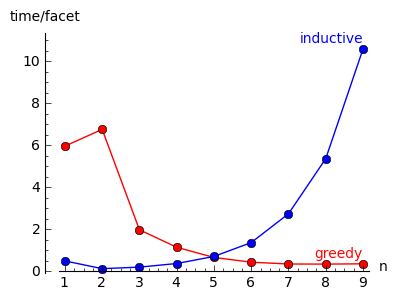} \quad \includegraphics[width=.5\textwidth]{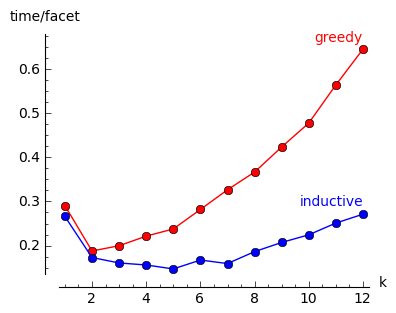}}
	\caption{Comparison of the running times of the inductive algorithm and the greedy flip algorithm to generate the $k$-cluster complex of type~$A_n$. On the left, $k$ is fixed at~$1$ while~$n$ increases; on the right, $n$ is fixed at~$3$ while~$k$ increases. The time is presented in millisecond per facet.}
	\label{fig:runnngTimes}
\end{figure}

We have implemented the greedy flip algorithm using the mathematical software Sage~\cite{sage}. This implementation is integrated into C.~Stump's patch on subword complexes. The user can now select either the inductive algorithm (directly based on the inductive structure of the subword complex as discussed in Section~\ref{subsec:generationgSubwordComplex}) or the greedy flip algorithm. We have seen that these two algorithms have the same theoretical complexity. To compare their experimental running time, we have constructed the $k$-cluster complex of type~$A_n$ for increasing values of~$k$ and~$n$. Its facets correspond to the $k$-triangulations of the $(n+2k+1)$-gon (see Example~\ref{exm:geometricGraphs} and~\cite{CeballosLabbeStump} for the definition of multicluster complexes in any finite type). The rank of the group is~$n$, while the length of the word is~$kn+{n \choose 2}$. \fref{fig:runnngTimes} presents the running time per facet for both enumeration algorithms in two situations: on the left, $k$ is fixed at~$1$ while~$n$ increases; on the right, $n$ is fixed at~$3$ while~$k$ increases. The greedy flip algorithm is better than the inductive algorithm in the first situation, and worst in the second. We observe a similar behavior for the computation of $k$-cluster complexes of types~$B_n$ and~$D_n$. In general, the inductive algorithm is experimentally faster when the Coxeter group is fixed, but slower when the size of the Coxeter group increases.

\begin{remark}
For type~$A$ spherical subword complexes, our algorithm is similar to that of~\cite{PilaudPocchiola} (which was formulated in terms of primitive sorting networks). Observe however that, contrarily to~\cite{PilaudPocchiola}, we allow~$\rho$ to be any element of~$W$. This slight generalization enables us to provide an inductive definition for the greedy flip tree, which simplifies the presentation of the algorithm. For the subword complexes which provide combinatorial models for pointed pseudotriangulations (see Example~\ref{exm:geometricGraphs}), our algorithm coincides with the greedy flip algorithm of~\cite{BronnimannKettnerPocchiolaSnoeying}.
\end{remark}


\section*{Acknowledgments}

I am grateful to C.~Stump for fruitful discussions on subword complexes and related topics and to M.~Pocchiola for introducing me to the greedy flip algorithm on pseudotriangulations. I also thank three anonymous referees for valuable comments and suggestions on this paper, in particular for pointing out an important mistake in a previous version of this paper. I thank the Sage and Sage-Combinat development team for making available this powerful mathematics software and C.~Stump again for helpful support on the Sage implementation of Coxeter groups and subword complexes.

\bibliographystyle{alpha}
\bibliography{greedyFlip.bib}

\end{document}